\theoremstyle{plain}
\newtheorem{prop}{Proposition}[section]
\newtheorem{teo}[prop]{Theorem}
\newtheorem{coro}[prop]{Corollary}
\newtheorem{lema}[prop]{Lemma}
\theoremstyle{definition}
\newtheorem{defi}[prop]{Definition}
\newtheorem{ejem}[prop]{Example}
\newtheorem{rem}[prop]{Remark}
\newtheorem{algorithm}[prop]{Algorithm}
\theoremstyle{remark}
\numberwithin{equation}{section}
\newcommand{\N}{\mathbb N}
\newcommand{\Z}{\mathbb Z}
\newcommand{\Q}{\mathbb Q}
\newcommand{\R}{\mathbb R}
\newcommand{\C}{\mathbb C}
\title[Isospectral manifolds]{Strongly isospectral manifolds with nonisomorphic cohomology rings}
\author{E. A. Lauret, R. J. Miatello and J. P. Rossetti}
\subjclass[2010]{58J53 (primary); 58C22, 20H15 (secondary).}
\keywords{isospectral, cohomology rings, primitive forms, flat manifolds}
\thanks{Supported by Conicet and Secyt-UNC}
\address{FaMAF--CIEM \\ Universidad Nacional de C\'ordoba.  5000 --- C\'ordoba, Argentina.}
\email{miatello@famaf.unc.edu.ar} \email{elauret@famaf.unc.edu.ar}\email{rossetti@famaf.unc.edu.ar}
\begin{document}
\maketitle

\begin{abstract}
For any $n\ge 7$, $k\ge 3$, we give pairs of compact flat $n$-manifolds  $M, M'$  with holonomy groups $\Z_2^k$, that are strongly isospectral, hence isospectral on $p$-forms for all values  of $p$, having nonisomorphic cohomology rings.
Moreover, if $n$ is even, $M$ is K\"ahler while  $M'$ is not. Furthermore, with the help of a computer program we show the existence of large Sunada isospectral families; for instance, for  $n= 24$ and $k=3$ there is a family of eight compact flat manifolds (four of them  K\"ahler) having very different cohomology rings. In particular, the cardinalities of the sets of primitive forms are different for all manifolds.
\end{abstract}

\section*{Introduction}\label{s.intro}
If $(M,g)$ is a compact Riemannian manifold and $0\le p \le n$, let $\textrm{spec}_p(M)$ denote the spectrum, with multiplicities, of the Hodge-Laplace operator acting on smooth $p$-forms on $(M,g)$. For each $p$, $\textrm{spec}_p(M)$  is  a sequence of non-negative real numbers tending to $\infty$.
If $\textrm{spec}_p(M)=\textrm{spec}_p(M')$, $(M,g)$ and $(M', g')$ are said to be {\it $p$-isospectral},  and just \emph{isospectral}, if $p=0$.

It has been known for quite some time that there exist manifolds that are isospectral on functions but not on 1-forms (see \cite{Go}, \cite{Ik}).
Also, C.~Gordon (see \cite{Go}) has given continuous families of pairs of nonisometric nilmanifolds that are isospectral on functions and not on 1-forms (here, the manifolds involved are homeomorphic to each other).

In the context of  compact flat manifolds, it turns out to be simpler to compute $p$-spectra and to determine some invariants, for instance, the Betti numbers.
In particular, there is a description of the cohomology ring as the ring of invariants of the holonomy action (H.~Hiller, \cite{Hi}).
In \cite{MR1} $p$-isospectrality is studied in this context and many new examples of $p$-isospectral nonhomeomorphic manifolds are given; in particular, pairs of manifolds $M, M'$ isospectral on functions such that $\beta_j(M) < \beta_j(M')$ for $1\le j\le n-1$. Hence such $M$ and $M'$ cannot be isospectral on $p$-forms for any $p \neq 0,n$ and are topologically quite different from each other, since they have different real cohomology rings.

The main goal of this paper  is to construct families of compact flat manifolds that are Sunada isospectral ---hence strongly isospectral--- but still their real cohomology rings are non-isomorphic to each other (see Theorem~\ref{thm:main}), despite the fact that they have the same Betti numbers. In particular, they are isospectral on $p$-forms for every $p$ but the ring structure of the cohomology rings may be very different.
The manifolds in question are obtained by using different free isometric actions of $\Z_2 ^k$ on $T^n = \Z^n \backslash \R^n$.
Furthermore, we shall see that for  $n$ even some of them are K\"ahler  while the others are not.

As a first step, we  show, in Theorem~\ref{thm:metodoflip}, a general procedure to construct pairs of almost-conjugate diagonal representations of $\Z_2^k$  with $k\ge 3$ (see Definition~\ref{def:almostconjugate}).
We describe diagonal representations by an $r$-tuple $q_1, q_2, \ldots, q_r$ with $\sum_1 ^r q_i = n$, where $q_j$ gives the multiplicity of the $j$-th character $\chi_j$.
We also give an algorithm that allows us to determine all families of almost-conjugate diagonal representations of $\Z_2^k$.
We implement it with the aid of computer programs for some small values of $k$ and $n$.
Tables~\ref{tabla:pairs3} and \ref{tabla:pairs4}  show all such pairs for $k=3$, $n\le 11$, and $k=4$, $n\le 10$ respectively.
In Table~\ref{tabla:flias3} we exhibit all families of cardinality at least three for $k=3$,  $n\le 15$.

A main tool in our study of the cohomology rings are the primitive invariant forms, i.e.\  those that cannot be obtained as wedge products of forms of lower degree.
In particular, we express the number of them in terms of the $r$-tuple of $q_j$'s (see Proposition~\ref{prop:prim}).
In Theorem~\ref{thm:generators} we show that this number coincides with the cardinality of a minimal generating set  of the cohomology ring, hence it is an invariant of the ring.
This is used in Section~\ref{sect:main} in the proof of the non-isomorphism of the cohomology rings of the strongly isospectral manifolds in our main result, Theorem~\ref{thm:main}.

In the last section, we exhibit many explicit examples of Sunada isospectral families.
We study in some detail a pair in dimension $n=8$, $M, M'$ such that $M$ is K\"ahler and $M'$ is not, giving the rings of invariants of both manifolds and comparing several aspects of the respective ring structures (this gives more examples answering a question in \cite[13.6, p. 657]{Be}).
In Example~\ref{ex:8dim24} we show a family of eight 24-dimensional manifolds, four of which are K\"ahler, showing that the numbers of primitive invariant forms of degree 4 are different for all eight manifolds, hence the cohomology rings cannot be isomorphic by Corollary~\ref{coro:sumprim}.

We include in Remark~\ref{question:open1} and Remark~\ref{question:open2} some open questions related to the results in this paper.

\section{Preliminaries}

\subsection*{Bieberbach groups}

A crystallographic group is a discrete cocompact subgroup  $\Gamma$ of the isometry group $I(\R^n)$ of $\R^n$.
If $\Gamma$ is torsion-free then $\Gamma$ is said to be a {\it Bieberbach group}.
Such $\Gamma$ acts properly discontinuously on $\R^n$, thus $M_\Gamma=\Gamma\backslash\R^n$ is a compact flat Riemannian manifold with fundamental group $\Gamma$. Furthermore, any such manifold arises in this way.
Since $I(\R^n)\cong \mathrm{O}(n) \rtimes \R^n$, any element $\gamma \in I(\R^n)$ decomposes uniquely as $\gamma = B L_b$, with $B \in \mathrm{O}(n)$ and $b\in \R^n$.
The translations in $\Gamma$ form a normal maximal abelian subgroup of finite index $L_\Lambda$ where $\Lambda$ is a lattice in $\R^n$ which is $B$-stable for each $BL_b \in \Gamma$.
The restriction to $\Gamma$ of the canonical projection from $I(\R^n)$ to $\mathrm{O}(n)$, given by $BL_b \mapsto B$, is a homomorphism with kernel $L_{\Lambda}$ and with image a finite subgroup of $\mathrm{O}(n)$, denoted by $F$ in this article, called the \emph{point group} of $\Gamma$.
Indeed, one has the exact sequence
\[
0\rightarrow L_\Lambda \rightarrow \Gamma \stackrel{r}{\rightarrow} F \rightarrow  1
\]
where $F$ is isomorphic to $L_\Lambda \backslash \Gamma$ and gives the linear holonomy group of the Riemannian manifold $M_\Gamma$.
The group $F$ acts on $\Lambda$ by an integral representation $\rho$ called the holonomy representation of $\Gamma$.

A Bieberbach group $\Gamma$ is said to be of \emph{diagonal type} if there exists an orthonormal $\Z$-basis $\{e_1,\dots,e_n\}$ of the lattice $\Lambda$ such that for any element $BL_b\in\Gamma$, $Be_i=\pm e_i$ for $1\le i\le n$.
These Bieberbach groups are those having the simplest holonomy action, among those with holonomy group $\Z_2^k$.
It is a useful fact that, for groups of diagonal type, after conjugation of $\Gamma$ by an isometry, it may be assumed that $\Lambda=\Z^n$ and, furthermore, that for any $\gamma=BL_b \in \Gamma$, $b$ lies in $\frac 1{2} \Z^n$.
Thus, any $\gamma \in \Gamma$  can be written uniquely as $\gamma=BL_{b_0}L_\lambda$, where the coordinates of $b_0$ are $0$ or $\frac 12$ and  $\lambda \in \Z^n$ (see~\cite[Lemma 1.4]{MR2}).

For $BL_b \in \Gamma$ define
\begin{align}
  n_B&= \textrm{dim}(\R^n)^B=|\{ 1\le i \le n: Be_i=e_i \}|\,, \label{eq:n_B}\\
  n_{B,\frac 12} & = |\{1 \le i \le n: Be_i=e_i \textrm{ and } b_0.e_i= \tfrac 12 \}|. \label{eq:n_Bhalf}
\end{align}
If $0\leq s\leq n$, let
\begin{align}\label{eq:c_s}
  c_{s}(F) &=\big|\big\{B \in F:n_B=s \big \}\big|.
\end{align}
If $0\le t \le s \le n$, the \emph{Sunada numbers} of $\Gamma$ are defined by
\begin{align}\label{eq:Sunada}
  c_{s,t}(\Gamma) &=\big|\big\{BL_b \in F:n_B=s \text{ and } n_{B,\frac 12}=t \big \}\big|.
\end{align}

It is a well-known fact that, by the torsion-free condition, $n_B\ge 1$ for any $BL_b \in \Gamma$. Clearly, $c_s(F)=\sum_t c_{s,t}(\Gamma)$.

\subsection*{$F$-invariants in exterior algebras}

As mentioned in the introduction, the cohomology ring over $\Q$ of a compact flat manifold $M_\Gamma$ with holonomy group $F$ can be computed by using the Hochschild-Serre spectral sequence, which gives
\begin{equation}\label{eq:HSseq}
H^*(M_\Gamma, \Q)\cong{\Lambda}^*_F(\Q^n)\,,
\end{equation}
the ring of $F$-invariants in the full exterior $\Q$-algebra ${\Lambda}^*(\Q^n)$ (see~\cite{Hi}).
In what follows we shall often abbreviate
\begin{eqnarray}\label{exterior}
\Lambda^*_F = \sum_{p=0}^n
\Lambda_F^p(\Q^n) \quad\text{and}\quad  \Lambda^p_F = \Lambda_F^p(\Q^n),
\end{eqnarray}
for $0\le p \le n$. In particular, $\dim(\Lambda^p_F) = \beta_p$ is the $p$-th Betti number of $M_{\Gamma}$.

We  mention some useful facts on the ring structure of $\Lambda^*_F$, for further use:
\begin{enumerate}
\item [(i)]  $\sum_{r=p}^n \Lambda_F^r$ is an ideal in $\Lambda^*_F$, for each $p$.
\item [(ii)] $\sum_{r=1}^n \Lambda_F^r$ is a maximal ideal of $\Lambda^*_F$ and any $\eta \in \Lambda^*_{F}\smallsetminus \sum_{r=1}^n  {\Lambda}_F^r $ is invertible.
\end{enumerate}
To verify the last claim, let $\eta = 1 +\delta \in \Lambda^*_F$ such that $\delta$ has degree zero component $\delta_0=0$. Then
\[
(1 +\delta )\wedge \big(\sum_{i=0}^n (-1)^i\; \underbrace{\delta \wedge\dots \wedge \delta}_i \big )=1
\]
and furthermore ${\sum_{i=0}^n (-1)^i \;\underbrace{\delta \wedge\dots \wedge \delta}_i \in \Lambda^*_F}$.

\subsection*{Primitive $F$-invariant forms}

From now on we assume that the subgroup $F$ of  $\mathrm{GL}_n(\Z)$ is of diagonal type,  i.e.\ $F$ is a group of diagonal matrices with $\pm 1$ in the diagonal, thus $F\cong \Z_2^k$ for some $1\le k \le n$.
If furthermore $F$ is the point group of a Bieberbach group, then $-\mathrm{Id}_n \notin F$ and $k\le n-1$. Here and subsequently, $\{e_1,\dots,e_n\}$ denotes the canonical basis of $\R^n$.

\begin{defi}\label{def:primitivos}
Let $F$ be a finite diagonal subgroup of $\mathrm{GL}_n(\Z)$. Given an ordered subset $I = \{i_1,\ldots ,i_p\} \subset \{1,\dots,n\}$ we set $e_I=e_{i_1}\wedge\ldots\wedge e_{i_p} \in \Lambda^p_F$.
The form $e_I$ is said to be \emph{primitive} if it cannot be obtained as a wedge product of $F$-invariant forms of degree lower than $p$.
We denote by ${\mathcal P}^p_F$ the set of all primitive forms of degree $p$, by $\Lambda^p_{F,prim}$ the span of ${\mathcal P}^p_F$ and by $P_{p,F}$ the cardinality of  ${\mathcal P}^p_F$.

Clearly, the set of all primitive forms is a set of generators of $\Lambda^*_F$ of cardinality $\sum_{p=0}^n P_{p,F}$.
We shall see that this is the minimal cardinality of any set of generators.
\end{defi}

We are interested in comparing  the $\Q$-algebras $\Lambda^*_F$ and $\Lambda^*_{F'}$ for two different Bieberbach groups $\Gamma$ and $\Gamma'$, having point groups $F, F'$, respectively.
The following result will be very useful to us.

\begin{teo}\label{thm:generators}
Given $F$ a finite diagonal subgroup of $\mathrm{GL}_n(\Z)$, let ${\mathcal G}$ be a set of generators of the algebra $\Lambda^*_F$.
Then $\# {\mathcal G} \ge  \sum_{p=0}^n  P_{p,F}$ and, if ${\mathcal G}$ is a minimal generating set, then $\# {\mathcal G}= \sum_{p=0}^n  P_{p,F}$.
\end{teo}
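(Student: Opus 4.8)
The plan is to exploit the graded structure of $\Lambda^*_F$ together with facts (i) and (ii) listed above. The key observation is that $\Lambda^*_F$ is a graded commutative $\Q$-algebra with $\Lambda^0_F = \Q$, and that $\mathfrak{m} := \sum_{r=1}^n \Lambda^r_F$ is its unique maximal ideal, with quotient $\Lambda^*_F/\mathfrak{m} \cong \Q$. In this local-algebra setting, a standard argument shows that a subset projects to generators precisely when its images span $\mathfrak{m}/\mathfrak{m}^2$; so I would aim to identify $\sum_p P_{p,F}$ with $\dim_\Q(\mathfrak{m}/\mathfrak{m}^2)$.

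**First** I would reduce to generators lying in $\mathfrak{m}$. Given any generating set $\mathcal G$, I may replace each generator $\eta$ by $\eta - \eta_0$ (subtracting its degree-zero component); by fact (ii), units contribute nothing essential, and one checks that the modified set still generates. Next I would argue that it suffices to consider generators that are homogeneous: since $\Lambda^*_F$ is graded and the defining relations respect the grading, projecting each generator onto its homogeneous components yields a generating set of cardinality at least $\#\mathcal G$ consisting of homogeneous elements, and taking homogeneous components can only help. The heart of the matter is then the claim that the primitive forms of degree $p$ descend to a \emph{basis} of $(\mathfrak m/\mathfrak m^2)_p = \Lambda^p_{F,prim}$ modulo decomposables. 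Indeed $\mathfrak m^2 \cap \Lambda^p_F$ is exactly the span of wedge products of lower-degree invariant forms, so by the very definition of primitivity in Definition~\ref{def:primitivos}, the image of $\mathcal P^p_F$ in $\Lambda^p_F/(\mathfrak m^2\cap \Lambda^p_F)$ is a spanning set; because the $e_I$ are part of the monomial basis of $\Lambda^p_F$, they are in fact linearly independent modulo decomposables, so they form a basis.

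**Combining these steps:** any generating set $\mathcal G$ maps onto a spanning set of $\mathfrak m/\mathfrak m^2 \cong \bigoplus_p \Lambda^p_{F,prim}$, which has dimension $\sum_{p=0}^n P_{p,F}$; hence $\#\mathcal G \ge \sum_p P_{p,F}$. For the reverse inequality when $\mathcal G$ is minimal, I would invoke Nakayama-type reasoning: the set of all primitive forms generates $\Lambda^*_F$ (noted already in Definition~\ref{def:primitivos}) and has cardinality exactly $\sum_p P_{p,F}$, so a minimal generating set cannot exceed this, forcing equality.

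**The main obstacle** I anticipate is justifying cleanly that one may pass to homogeneous generators without increasing cardinality, and that $\mathfrak m^2 \cap \Lambda^p_F$ coincides with the span of products of strictly-lower-degree invariant monomials rather than some larger space. The subtlety is that an arbitrary $F$-invariant form of degree $p$ need not be a single $e_I$, and products of invariants could conceivably produce cancellations; I would handle this by working in the monomial basis $\{e_I\}$ of $\Lambda^p_F$ and checking that the grading makes the $\mathfrak m^2$-component transparent, so that primitivity of $e_I$ is detected exactly by its nonvanishing in the quotient.
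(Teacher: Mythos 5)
Your proposal is correct in substance, but it takes a genuinely different route from the paper. The paper argues by an explicit induction on degree: it repeatedly replaces the generating set ${\mathcal G}$ by one of the same cardinality, normalized so that ${\mathcal G}_0=\{\eta^0\}$ and so that, for each $r\le p$, the elements of ${\mathcal G}_r$ have lowest-degree terms running exactly through the primitive forms in ${\mathcal P}^r_F$; the inequality $\#{\mathcal G}_r\ge P_{r,F}$ then comes out degree by degree. You instead package the same underlying mechanism in the graded-local-algebra formalism: $\mathfrak{m}=\sum_{r\ge1}\Lambda^r_F$ is the unique maximal ideal, a subset of $\mathfrak m$ generates if and only if its image spans $\mathfrak m/\mathfrak m^2$ (graded Nakayama, valid because $\mathfrak m$ is nilpotent), and the primitive monomials descend to a basis of $\mathfrak m/\mathfrak m^2$ because $\mathfrak m^2$ is a graded ideal spanned by precisely the non-primitive invariant monomials --- your monomial-basis justification of this last point is exactly right, and it is the key fact the paper uses implicitly. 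Your route buys brevity, standard machinery, and a conceptual reason why the number of primitives is an isomorphism invariant; the paper's buys an elementary, self-contained argument with explicit normalized generating sets. Two repairs are needed in your write-up. First, an off-by-one: $\dim_\Q(\mathfrak m/\mathfrak m^2)=\sum_{p=1}^n P_{p,F}$, not $\sum_{p=0}^n P_{p,F}$, since degree zero does not lie in $\mathfrak m$; the extra $P_{0,F}=1$ in the statement corresponds to the generator needed to produce the unit under the paper's convention (this is what the paper's normalization ${\mathcal G}_0=\{\eta^0\}$ accounts for, and the paper itself wavers here, ending its proof with $\sum_{p=1}^n$), so you must either adopt that convention and count one extra generator for the constants, or note that your bound differs from the stated one by exactly this unit. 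Second, your homogenization step is both unnecessary and stated backwards: splitting generators into homogeneous components can only increase cardinality, which is useless for a lower bound; fortunately it is harmless, because your core argument --- images of arbitrary generators span $\mathfrak m/\mathfrak m^2$, since any product of two elements of $\mathfrak m$ dies in the quotient --- never uses homogeneity, and the minimality claim follows from the converse direction of Nakayama (a spanning subset of the images already generates, so a minimal set must map bijectively onto a basis).
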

\begin{proof}
Denote by
\begin{equation}
{\mathcal G}_p = \left\{\eta  = {\textstyle \sum\limits_{r=p}^n} \, \eta_{r}\in {\mathcal G} : \textrm{ with } \eta_{r} \in \Lambda_F^r,\,  \eta_p \ne 0 \right\}\,,
\end{equation}
i.e.\ the elements in ${\mathcal G}$ having a nonzero component of minimal degree $p$.

We note that ${\mathcal G}_0$ is non-empty, otherwise we cannot obtain 1 as a sum of products of elements of  ${\mathcal G}$. Furthermore, given a set of elements in ${\mathcal G}_0$, by subtraction of a scalar multiple, we can eliminate the zero component of all but one of them.
Thus, we may  replace the initial generating set  ${\mathcal G}$ by another generating set of the same cardinality, such that ${\mathcal G}_0 = \{\eta^0\}$ has only one element with  $\eta^0_0=1$.

Now, the lowest components in $\Lambda^1_F$ of the elements in ${\mathcal G}_1$ must span all of $\Lambda^1_F$, otherwise there is no way we can generate $\Lambda_F^1$ with sums of products in ${\mathcal G}$.
Thus, we may select a subset ${\mathcal S}_1$ of  ${\mathcal G}_1$ with $\beta_1$ elements, such that the nonzero  components of minimal degree span $\Lambda^1_F$.
Furthermore, we can subtract a linear combination of elements in ${\mathcal S}_1$ from each element in ${\mathcal G}_1\smallsetminus {\mathcal S}_1$ to cancel the component in  $\Lambda_F^1$.

In this way, we may replace the original generating set ${\mathcal G}$ by another generating set of the same cardinality such that  $\textrm{card}({\mathcal G}_0) = 1$, $\textrm{card}({\mathcal G}_1) = P_{1,F}$.
Finally, by replacing the elements in ${\mathcal S}_1$  by linear combinations of them, we may further assume that their lowest degree terms run through the set of  $F$-invariant $e_j$'s, that is, through the set of all primitive forms  $e_j \in \Lambda_F^1$. Here note that $\beta_1 = P_{1,F}$.

In a similar way we assume inductively that we have replaced the original generating set ${\mathcal G}$ by another set of the same cardinality such that  the cardinality of ${\mathcal G}_r$ equals $P_{r,F}$ for each $r\le p$ and the lowest degree terms of the elements in ${\mathcal G}_r$, run through the set of primitive forms  $e_J \in \Lambda_F^r$ with $|J|=r$.

Now we consider the elements in ${\mathcal G}_{p+1}$.
Necessarily there must be at least $P_{p+1,F}$ of them, so that their $(p+1)$-components, together with sums of products of elements in ${\mathcal G}_r$ with $r \le p$ generate all of  $\Lambda^{p+1}_F$.
We may subtract from the elements in ${\mathcal G}_{p+1}$ linear combinations of wedge products of elements of smaller degree so that  their lowest order terms lie in the span of the space ${\mathcal P}^{p+1}_F$.
Actually, we  may fix a subset of cardinality $P_{p+1,F}$ such that their lowest order terms are a basis of  the space ${\mathcal P}^{p+1}_F$.
Finally by a linear algebra argument, we may change this set by one such that their lowest  order terms run exactly through the invariant forms  $e_J \in {\mathcal P}^{p+1}_F$.

In this way, in $n$ steps, we obtain a new set of generators of the $\Q$-algebra $\Lambda^*_F$ of cardinality  $\sum_{p=1}^n P_{p,F} \le  \textrm{card}({\mathcal G})$.
Clearly if ${\mathcal G}$ is minimal, then  $\sum_{p=1}^n P_{p,F} =  \textrm{card}({\mathcal G})$.
This completes the proof of the theorem.
\end{proof}

\begin{coro}\label{coro:sumprim}
Let $F, F'$  finite diagonal subgroups of $\mathrm{GL}_n(\mathbb Z)$.
If, as $\Q$-algebras, $\Lambda^*_F \cong \Lambda^*_{F'}$ then
\begin{equation}
\sum_{p=1}^n P_{p,F} = \sum_{p=1}^n P_{p,F'}\,.
\end{equation}
If as graded $\Q$-algebras $\Lambda^*_F \cong \Lambda^*_{F'}$ then $P_{p,F} = P_{p,F'}$ for every $0\le p \le n$.
\end{coro}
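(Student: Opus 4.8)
The plan is to derive the first statement directly from Theorem~\ref{thm:generators} and to obtain the second by a graded refinement of the same idea. For the ungraded part I would use that the minimal cardinality of a generating set of a $\Q$-algebra is an isomorphism invariant: any algebra isomorphism $\varphi\colon \Lambda^*_F \to \Lambda^*_{F'}$ carries a generating set to a generating set of the same cardinality, and $\varphi^{-1}$ does the reverse, so the two minimal cardinalities coincide. By Theorem~\ref{thm:generators} these minimal cardinalities equal $\sum_{p=0}^n P_{p,F}$ and $\sum_{p=0}^n P_{p,F'}$. Since $P_{0,F}=P_{0,F'}=1$ (the only primitive form of degree $0$ being the constant $1$), subtracting this common term gives $\sum_{p=1}^n P_{p,F}=\sum_{p=1}^n P_{p,F'}$.

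For the graded statement I would first recast $P_{p,F}$ as the dimension of a space of indecomposables. Set $D^p_F = \sum_{a+b=p,\,a,b\ge 1} \Lambda^a_F \wedge \Lambda^b_F \subseteq \Lambda^p_F$, the subspace of decomposable invariants of degree $p$. The key claim is that $P_{p,F}=\dim(\Lambda^p_F/D^p_F)$. To prove it I would use that, since $F$ is diagonal, $\Lambda^p_F$ has the monomial basis $\{e_I : |I|=p,\ e_I \text{ is } F\text{-invariant}\}$, and each such $e_I$ is either primitive or, by Definition~\ref{def:primitivos}, a wedge product of lower-degree invariant forms, hence a monomial lying in $D^p_F$. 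Expanding an arbitrary product of invariant forms in this monomial basis shows conversely that every monomial appearing is non-primitive, so $D^p_F$ is exactly the span of the non-primitive invariant monomials. Thus $\Lambda^p_F = \Lambda^p_{F,prim} \oplus D^p_F$, whence $\Lambda^p_F/D^p_F \cong \Lambda^p_{F,prim}$ has dimension $P_{p,F}$.

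Given now a graded algebra isomorphism $\varphi\colon \Lambda^*_F \to \Lambda^*_{F'}$, I would note that $\varphi$ maps $\Lambda^p_F$ isomorphically onto $\Lambda^p_{F'}$ for each $p$ and, being an algebra homomorphism, carries $\Lambda^a_F \wedge \Lambda^b_F$ onto $\Lambda^a_{F'} \wedge \Lambda^b_{F'}$; hence $\varphi(D^p_F)=D^p_{F'}$. Consequently $\varphi$ descends to an isomorphism $\Lambda^p_F/D^p_F \cong \Lambda^p_{F'}/D^p_{F'}$, and comparing dimensions yields $P_{p,F}=P_{p,F'}$ for every $0\le p\le n$.

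I expect the only delicate step to be the identification $P_{p,F}=\dim(\Lambda^p_F/D^p_F)$, where the diagonal hypothesis is essential: one must check that expanding products of invariant forms produces only non-primitive monomials, so that the decomposable subspace introduces no new primitive directions. Once this combinatorial fact is established, both assertions reduce to the elementary observations that the minimal generating cardinality is an algebra invariant and that the dimensions of the degreewise indecomposable quotients $\Lambda^p_F/D^p_F$ are graded-algebra invariants.
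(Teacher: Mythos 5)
Your proposal is correct and takes essentially the same route as the paper: the first assertion is exactly the paper's argument (the minimal cardinality of a generating set is an algebra-isomorphism invariant and equals the total number of primitive forms by Theorem~\ref{thm:generators}), with your subtraction of the common term $P_{0,F}=P_{0,F'}=1$ handling the indexing. For the second assertion the paper only remarks that it is ``also clear''; your identification $P_{p,F}=\dim\bigl(\Lambda^p_F/D^p_F\bigr)$ via the decomposable subspace $D^p_F=\sum_{a+b=p,\,a,b\ge1}\Lambda^a_F\wedge\Lambda^b_F$, which any graded algebra isomorphism must carry onto $D^p_{F'}$, is a correct and careful way of filling in precisely that step, using the monomial basis of invariants (where the diagonal hypothesis enters) to see that $D^p_F$ is spanned by the non-primitive invariant monomials.
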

\begin{proof}
By the previous theorem, $\sum_{p=1}^n P_{p,F}$ is the cardinality of a minimal generating set in $\Lambda^*_F$, hence it must be invariant under isomorphisms. The second assertion is also clear.
\end{proof}

\section{Construction of almost-conjugate representations}\label{s.construction}

This section is devoted to the construction of pairs of almost-conjugate representations which give the point groups of certain Bieberbach groups of diagonal type.
The corresponding pairs of manifolds, to be constructed in Sections~\ref{sect:main} and \ref{sect:examples}, will be Sunada isospectral (\cite{Go2} or \cite{MR3}) and they will have different cohomology rings.

\begin{defi}\label{def:diag-rep}
A monomorphism $\rho:\Z_2^k\to \textrm{GL}_n(\Z)$ such that $\mathrm{Im} (\rho)$ is a subgroup of diagonal matrices will be called an \emph{integral diagonal representation} of $\Z_2^k$  or, for brevity, a \emph{diagonal representation} of $\Z_2^k$.
\end{defi}

A \emph{character} of $\Z_2^k$ is a homomorphism $\chi: \Z_2^k \to \{\pm 1\}$. The set of all such characters is denoted by $\widehat{\Z_2^k} \cong \Z_2^k$. Sometimes, it will be convenient to identify characters of $ \Z_2^k$ with subsets of $\{1,\dots,k\}$.
If $f_1,\ldots,f_k$ denotes the canonical basis of $\Z_2^k$, for $I\subset \{1,\dots,k\}$ we set  $\chi_{_I}:\Z_2^k\to\{\pm1\}$, the character given on  basis elements by
\[
\chi_{_I}(f_i)=
\begin{cases}
  -1&\textrm{if $i\in I$,}\\
 \phantom {-}1&\textrm{if $i\notin I$,}
\end{cases}
\qquad \textrm{for }1\leq i\leq k.
\]
Thus $\chi_{_{I_1}}\chi_{_{I_2}}=\chi_{_{I_1\triangle I_2}}$ for $I_1,I_2\subset\{1,\dots,k\}$, where $I_1\triangle I_2=(I_1\cup I_2)\smallsetminus (I_1\cap I_2)$ denotes the symmetric difference of sets.

From now on it will be convenient to fix  a total order $\prec$ on ${\widehat\Z_2^k}$ (or equivalently on the subsets of $\{1,\dots,k\}$) with the only requirement that  $\chi_\emptyset=1$ is the first element.

Any $n$-dimensional diagonal representations of $\Z_2^k$ can be decomposed as a sum $\rho=\sum_I q_{_I} \chi_{_I}$, with $q_{_I} \in \N_0:=\N\cup\{0\}$ and $n = \sum_I {q}_{_I}$,  the sum  running over all subsets of $\{1,\dots,k\}$.
Conversely, if $r=2^k $, for each choice of numbers $q_{_{I}} \in \N_0$, we define the diagonal representation $\rho=\sum_I q_{_I} \chi_{_I}$  such that
\begin{equation}\label{eq:rho}
\rho(f)= \mathrm{diag}\Big(\underbrace{\chi_{_{J_1}}(f), \dots ,\chi_{_{J_1}}(f)}_{q_{_{J_1}}}, \ldots, \underbrace{\chi_{_{J_r}}(f),\ldots, \chi_{_{J_{r}}}(f)}_{q_{_{J_r}}}\Big)\,,
\end{equation}
for $f \in \Z_2^k$, where the characters $\chi_{_{J_i}}$ are ordered by  $\prec$.

\begin{defi} \label{def:F}
Let $\rho$ be a diagonal representation of $\Z_2^k= \langle f_j: 1\le j \le k\rangle$.
We will denote by $F$ the image of $\rho$, $F = \mathrm{Im}(\rho)\cong \Z_2^k$, which is generated by the diagonal matrices $B_i := \rho(f_i)$, $1\le i \le k$.
Given a subset $I=\{i_1, \dots, i_h\} \subset \{1,\dots,k\}$, we set $B_I = B_{i_1}\dots B_{i_h}$.
For simplicity, we will often write
\begin{equation}
B_{i_1\dots i_h},\, q_{i_1\dots i_h},\, \chi_{i_1\dots i_h},\, q_0,\, \chi_0
\quad\text{in place of}\quad
B_I,\, q_{_I},\, \chi_{_I},\, q_\emptyset,\,\chi_\emptyset
\end{equation}
respectively.
\end{defi}

Let $\rho$ and $\rho'$ be diagonal representations of $\Z_2^k$.
It is easy to check that these representations are equivalent if and only if the groups $F$ and $F'$ are conjugate in $\mathrm{O}(n)$.
For example, $\rho=2\chi_1+\chi_2+\chi_{12}$ and $\rho'=\chi_1+2\chi_2+\chi_{12}$ are two equivalent diagonal representation of $\Z_2^3$.

We will need some more notation. For $p\geq1$, let
\begin{multline}\label{eq:Ap}
{\mathcal A_p} = \big\{ \{I_1,\dots, I_p\} \,:\, \chi_{_{I_1}}\ldots\chi_{_{I_p}} =1 \;  \textrm{ and no proper subproduct} \\  \textrm{ of the }\chi_{_{I_p}} \textrm{  equals } 1 \big\}\,.
\end{multline}
For example, for $k=3$, since there are seven nontrivial characters, and one can check that $\#\mathcal A_0=\#\mathcal A_1=1$, $\#\mathcal A_2=\#\mathcal A_3=\#\mathcal A_4=7$ and $\mathcal A_p =\emptyset$ for any other~$p$.

The next proposition  gives some formulas to be used in the next section to compute the primitive elements of certain diagonal Bieberbach groups.

\begin{prop}\label{prop:prim}
Let $\rho=\sum_{I} q_{_I}\chi_{_I}$ be a diagonal representation of $\Z_2^k$.

\textup{(i)} The number $P_{p,F}$  of primitive $F$-invariant forms  of degree  $p$ (see Definition~\ref{def:primitivos}) is given by the following expression:
\begin{equation}\label{eq:P_p}
\begin{array}{l}
\displaystyle
P_{0,F}= 1,\quad\quad\;
P_{1,F}= q_{\emptyset}, \quad\quad\;
P_{2,F}= \sum_{I\neq\emptyset} \binom{q_{_I}}{2},\\
\displaystyle
P_{p,F}= \sum_{\{I_1, \dots, I_p\} \in \mathcal A_p} q_{_{I_1}}\dots q_{_{I_p}}\qquad \textrm{for}\quad 3 \le p \le k,
\end{array}
\end{equation}
where ${\mathcal A}_p$ is as in \eqref{eq:Ap}.
Moreover, $P_{p,F} = 0$ for any $p>k+1$.

\textup{(ii)} If $k=3$ one has
\begin{multline}\label{eq:prim4}
\;\; P_{4,F}
= q_{1}\, q_{2}\,  q_{3}\,  q_{123}
+ q_{1}\, q_{2}\,  q_{13}\, q_{23}
+ q_{1}\, q_{3}\,  q_{12}\, q_{23} \\
+ q_{1}\, q_{12}\, q_{13}\, q_{123}
+ q_{2}\, q_{3}\,  q_{12}\, q_{13}
+ q_{2}\, q_{12}\, q_{23}\, q_{123}
+ q_{3}\, q_{13}\, q_{23}\, q_{123}.
\end{multline}

\textup{(iii)} Since $\beta_p =\dim \Lambda_F^p$ for $0\le p \le n$, then
\[
\begin{array}{ll}
\displaystyle
\beta_0 = P_{0,F} = 1, \hspace{2cm}  &
\displaystyle
\beta_1 = P_{1,F}, \\[3mm]
\displaystyle
\beta_2 = \tbinom {q_{\emptyset}}{2} + P_{2,F}, \hspace{2cm} &
\displaystyle
\beta_3 = \tbinom{q_\emptyset}3 + q_\emptyset P_{2,F}  +  P_{3,F}, \\[3mm]
\multicolumn{2}{l}{ \displaystyle
\beta_4 = \sum_{\emptyset\neq I} \tbinom{q_{_I}}{4} + \sum_{\emptyset \neq I_1\prec I_2} \tbinom{q_{_{I_1}}}2 \tbinom{q_{_{I_2}}}2
           + q_\emptyset P_{3,F} + \tbinom{q_\emptyset}2 P_{2,F} + \tbinom{q_\emptyset}4 + P_{4,F}.}
\end{array}
\]
In particular, if $q_{\emptyset}=\beta_1=0$, then
\begin{equation}
\beta_2 = P_{2,F}, \quad\quad
\beta_3 = P_{3,F}, \quad\quad
\beta_4 = \sum_{\emptyset \neq I_1\prec I_2} \tbinom{q_{_{I_1}}}2 \tbinom{q_{_{I_2}}}2 +  P_{4,F} .
\end{equation}
\end{prop}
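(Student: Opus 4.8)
The plan is to compute $P_{p,F}$ by directly translating the definition of a primitive $F$-invariant form into combinatorics on the characters $\chi_{_I}$. First I would fix the key observation: a coordinate form $e_I = e_{i_1}\wedge\cdots\wedge e_{i_p}$ is $F$-invariant if and only if the product of the characters attached to the indices $i_1,\dots,i_p$ equals the trivial character $1$. Indeed, $B\in F$ acts on $e_I$ by the scalar $\prod_{j}\chi_{\text{index }i_j}(f)$ for each $f$, where the index of a basis vector records which character block it lies in; invariance for all $f\in\Z_2^k$ forces the product of those characters to be identically $1$. With this dictionary, the low-degree cases fall out immediately: the only invariant $1$-forms are the $e_i$ living in the $\chi_\emptyset$-block, giving $P_{1,F}=q_\emptyset$; and an invariant $2$-form $e_i\wedge e_j$ needs $\chi\cdot\chi'=1$, i.e.\ the two indices carry the \emph{same} nontrivial character, so we count $\binom{q_{_I}}{2}$ over all $I\neq\emptyset$ to get $P_{2,F}$.

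For $3\le p\le k$ the heart of the matter is primitivity, not mere invariance. Here I would argue that an invariant $e_I$ of degree $p$ is primitive precisely when \emph{no proper nonempty subset} $J\subsetneq I$ already gives an invariant form $e_J$, for otherwise $e_I=\pm\,e_J\wedge e_{I\setminus J}$ expresses $e_I$ as a wedge of lower-degree invariants. Translating through the dictionary, invariance of a proper subproduct corresponds exactly to a proper subproduct of the characters $\chi_{_{I_1}},\dots,\chi_{_{I_p}}$ equalling $1$. This is precisely the condition defining ${\mathcal A}_p$ in \eqref{eq:Ap}. Hence to count primitive $e_I$ of degree $p$ I group the indices by which character block they occupy: choosing one index from each of $p$ distinct blocks labelled $I_1,\dots,I_p$ whose characters multiply to $1$ with no vanishing proper subproduct contributes the product $q_{_{I_1}}\cdots q_{_{I_p}}$, and summing over $\{I_1,\dots,I_p\}\in{\mathcal A}_p$ yields the stated formula. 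The vanishing $P_{p,F}=0$ for $p>k+1$ follows because $\widehat{\Z_2^k}$ has rank $k$, so any $\ge k+2$ characters summing to $1$ must contain a proper vanishing subproduct; the extreme case $p=k+1$ survives only via blocks indexed by distinct characters in general position.

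The delicate point, and the step I expect to require the most care, is ruling out \emph{mixed} primitivity obstructions: a priori $e_I$ could fail to be primitive not because a sub-coordinate-form $e_J$ is invariant, but because $e_I$ lies in the span of wedge products of lower-degree invariant forms that are themselves linear combinations, not single monomials $e_J$. I would handle this by exploiting that $\Lambda^*_F$ has a monomial basis: the invariant forms $e_J$ span $\Lambda^*_F$ and are linearly independent, so any wedge product of invariants expands into invariant monomials, and $e_I$ appears in such an expansion only if $e_I$ itself factors as $e_J\wedge e_{I\setminus J}$ with both factors invariant monomials. This reduces the apparently analytic notion of primitivity back to the purely combinatorial subproduct condition and legitimizes the counting above. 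For part (ii) I would simply enumerate ${\mathcal A}_4$ for $k=3$: the seven unordered $4$-subsets of the seven nontrivial characters whose product is $1$ and which contain no vanishing proper subproduct, reading off the seven monomials in \eqref{eq:prim4}. For part (iii) the Betti numbers $\beta_p=\dim\Lambda^p_F$ count \emph{all} invariant monomials of degree $p$, which I would obtain by summing, over each way of distributing $p$ indices among the character blocks subject to the product-equals-$1$ constraint, the corresponding product of binomial coefficients $\binom{q_{_I}}{m_I}$; organizing this distribution by the number of blocks used and applying the already-established formulas for $P_{2,F},P_{3,F},P_{4,F}$ gives the displayed expressions for $\beta_2,\beta_3,\beta_4$, with the stated simplifications when $q_\emptyset=0$.
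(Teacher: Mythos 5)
Your proposal is correct and takes essentially the same approach as the paper: identify each basis vector with its character, characterize invariant monomials by the product-equals-$1$ condition, reduce primitivity to the absence of proper vanishing subproducts (hence to membership in $\mathcal{A}_p$), and count block choices, with the dimension bound on $\widehat{\Z_2^k}$ giving $P_{p,F}=0$ for $p>k+1$. Your explicit justification that primitivity can be tested on monomial factorizations alone (via the invariant-monomial basis of $\Lambda^*_F$) fills in a step the paper passes over with ``clearly,'' but it is a refinement of the same argument rather than a different route.
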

\begin{proof}
Clearly $P_{0,F}=1$.
By \eqref{eq:rho},  the group $F$ acts by a character $\psi_j$ on each $e_j\in \R^n$. Hence, for any $p$, the indecomposable invariant forms of degree $p$ are of the form $e_{i_1}\wedge e_{i_2}\ldots \wedge e_{i_p}$ where the corresponding characters satisfy $ \psi_{{i_1}}\, \psi_{{i_2}}\dots  \psi_{{i_p}}=1$ and  they are primitive if and only if  none of the proper subproducts of the $ \psi_{i_j}$ equals one.
Now, this is clearly equivalent to $\{I_1,\dots,I_p\}\in\mathcal A_h$, and consequently \eqref{eq:P_p} follows.
Furthermore, in this situation, it is necessary  that $\psi_{i_1}, \psi_{i_2} \ldots, \psi_{i_{p-1}}$ be linearly independent, hence $p-1 \le k$, as claimed in (i).

We now prove (ii). By the definition we have
\begin{multline*}
\mathcal A_4 =
\Big\{
\{1,2,3,123 \}, \{1,2,13,23\}, \{1,3,12,23\}, \{1,12,13,123\}, \\ \{2,3,12,13\}, \{2,12,23,123\}, \{3,13,23,123\}
\Big\},
\end{multline*}
where we have written $i_1\dots i_p$ in place of $\{i_1,\dots,i_p\}$. The asserted expression of $P_{4,F}$  follows immediately from \eqref{eq:P_p}.

The  expressions in (iii) can be easily obtained from (i).
\end{proof}

The following notion will be useful in the construction of isospectral flat manifolds.
\begin{defi}\label{def:almostconjugate}
We say that two diagonal representations $\rho$ and $\rho'$ are \emph{almost-conjugate} if the subgroups $F=\textrm{Im} (\rho)$ and $F'=\textrm{Im} (\rho')$ are almost-conjugate, that is, if there is a bijection $\phi: F \to F'$ that preserves the conjugacy class in $\mathrm{O}(n)$.
\end{defi}

Note that since the only eigenvalues of the elements of $F$ and $F'$ are $\pm 1$, the condition in the definition is equivalent to requiring that, for each  $0\leq s\leq n$,
\begin{equation}\label{eq:patron}
c_s(F)=c_s(F')\, ,
\end{equation}
in the notation of \eqref{eq:c_s}.

When the bijection $\phi$ is an isomorphism, then the representations $\rho$ and $\rho'$ are actually equivalent, but in general this is not the case (this can be easily checked in Example~\ref{ex:dim8} by using \eqref{eq:patron}).

Given $\rho=\sum_I q_{_I}\chi_{_I}$, our next goal is to perform a small perturbation of $\rho$ by constructing a diagonal representation $\rho'$ of $\Z_2^k$ having the same set of $n_B$'s with their multiplicities, more precisely, $\rho'$ will satisfy $n_{B_I'}=n_{B_I}$ for every $I\subset\{1,\dots,k\}$ with $I\neq\{1\},\{2\}$, $n_{B_1'}=n_{B_2}$ and $n_{B_2'}=n_{B_1}$.
Thus $\rho$ and $\rho'$ will be almost-conjugate. We assume first that $k=3$ and $\rho$ is a fixed diagonal representation of $\Z_2^3$.
The above equations induce a  linear system of eight equations in the eight variables $q_0', q_1',\dots,q_{123}'$ that turns out to be non-singular.
Setting $u=q_1'-q_1$, we can write the solution of the system as
\begin{align*}
q_0'&=q_0, & q_1'&=q_1+u, & q_{13}'&=q_{13}+u, & q_{12}'&=q_{12},\\
q_3'&=q_3, & q_2'&=q_2-u, & q_{23}'&=q_{23}-u, & q_{123}'&=q_{123},
\end{align*}
under the condition
\[
q_1'+q_{13}'+2u=q_2'+q_{23}'.
\]
Note that $\rho'$ will be a solution such that $q_I'\in\N_0$ for all $I$  if and only if  $u=(q_2'+q_{23}'-q_1'-q_{13}')/2\in\Z$ and $q_1+u, q_2-u, q_{13}+u, q_{23}-u\in\N_0$.

As we shall see in Theorem~\ref{thm:metodoflip}, this method generalizes to any $k\geq3$ and gives a procedure to construct pairs of almost-conjugate representations.

\begin{defi}\label{def:flip}
Given $\rho=\sum_{I} q_{_I}\chi_{_I}$ a diagonal representation of $\Z_2^k$ such that the number
\begin{equation}\label{eq:u}
  u=\frac{1}{2^{k-2}}\left(\sum_{2\in I,\, 1\notin I} q_{_I} - \sum_{1\in I,\, 2\notin I} q_{_I}\right)
\end{equation}
is an integer and furthermore $q_{_I}-u\geq 0\,$ if $\,2\in I,\, 1\notin I$ and  $q_{_I}+u\geq 0\,$ if $1\in I,\, 2\notin I$, we define the \emph{flip} of $\rho$ as
\begin{equation}\label{eq:sigma}
  \rho'=\sum_{I} (q_{_I}+u\delta_{_I})\chi_{_I}, \quad \textrm{ where } \quad \delta_I:=
  \begin{cases}
  1&\textrm{ if }\;  1\in I,\, 2\notin I,\\
 -1&\textrm{ if }\;  2\in I,\, 1\notin I,\\
  0&\textrm{ otherwise.}
 \end{cases}
\end{equation}
\end{defi}

It is easy to check that $\rho'$ is again a diagonal representation of $\Z_2^k$ in the sense of Definition~\ref{def:diag-rep}.

\begin{teo}\label{thm:metodoflip}
Let $\rho=\sum_{I} q_{_I}\chi_{_I}$ be a diagonal representation of $\Z_2^k$. If $\rho'$ is the flip of $\rho$ then
\begin{equation}\label{eq:n_Bflip}
n_{B_{1}}=n_{B_{2}'},\qquad
n_{B_{2}}=n_{B_{1}'},\qquad
n_{B_I}  =n_{B_I'} \quad\text{for }\; I\neq\{1\},\{2\}.
\end{equation}
In particular, $\rho$ and $\rho'$ are almost-conjugate representations.
\end{teo}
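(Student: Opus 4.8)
The plan is to reduce everything to a single clean formula for the fixed-space dimension $n_{B_J}$ in terms of the multiplicities $q_{_I}$, and then to track how this quantity changes under the perturbation $q_{_I}\mapsto q_{_I}+u\delta_{_I}$ defining the flip.

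First I would record the basic observation that, by \eqref{eq:rho}, the matrix $B_J=\rho(f_{i_1}\cdots f_{i_h})$ (for $J=\{i_1,\dots,i_h\}$) acts on each basis vector $e_i$ of type $\chi_{_I}$ by the scalar $\chi_{_I}(f_J)=\prod_{j\in J}\chi_{_I}(f_j)=(-1)^{|I\cap J|}$. Hence $B_J$ fixes $e_i$ exactly when $|I\cap J|$ is even, and
\[
n_{B_J}=\sum_{I:\,|I\cap J|\ \mathrm{even}} q_{_I}.
\]
Applying the same formula to $\rho'$ and subtracting, I obtain $n_{B_J'}=n_{B_J}+u\,S_J$, where $S_J:=\sum_{I:\,|I\cap J|\ \mathrm{even}}\delta_{_I}$. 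Everything then hinges on evaluating $S_J$.

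The key device is the involution $\sigma$ on subsets of $\{1,\dots,k\}$ swapping the roles of $1$ and $2$, i.e.\ $\sigma(I)=(I\setminus\{1\})\cup\{2\}$ when $1\in I,\,2\notin I$ and conversely. This is a bijection between the sets $\mathcal I^+=\{I:1\in I,\,2\notin I\}$ and $\mathcal I^-=\{I:2\in I,\,1\notin I\}$, on which $\delta_{_I}=+1$ and $\delta_{_I}=-1$ respectively; note $\delta_{_I}=0$ off $\mathcal I^+\cup\mathcal I^-$, and that $I$ and $\sigma(I)$ share the same trace $I'=I\cap\{3,\dots,k\}$. I would then split into cases according to how $J$ meets $\{1,2\}$. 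When $1,2$ both lie in $J$ or both lie outside $J$, the parities of $|I\cap J|$ and $|\sigma(I)\cap J|$ agree, so each pair $(I,\sigma(I))$ contributes $+1-1=0$ or $0$ to $S_J$; hence $S_J=0$. When exactly one of $1,2$ lies in $J$, say $1\in J,\,2\notin J$, writing $J'=J\cap\{3,\dots,k\}$, the two parities differ by the constant $1$, and a short computation reduces $S_J$ to $\#\{I'\subseteq\{3,\dots,k\}:|I'\cap J'|\ \mathrm{odd}\}-\#\{I':|I'\cap J'|\ \mathrm{even}\}$. The hard (but standard) point is the parity count: this difference vanishes as soon as $J'\neq\emptyset$, and equals $-2^{k-2}$ when $J'=\emptyset$, i.e.\ precisely when $J=\{1\}$. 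Symmetrically $S_{\{2\}}=+2^{k-2}$. Thus $S_J=0$ for all $J\neq\{1\},\{2\}$, which already yields $n_{B_I}=n_{B_I'}$ for $I\neq\{1\},\{2\}$.

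It remains to identify the two exceptional values. From the displayed formula, $n_{B_1}-n_{B_2}=\sum_{2\in I,\,1\notin I}q_{_I}-\sum_{1\in I,\,2\notin I}q_{_I}=2^{k-2}u$ by the definition \eqref{eq:u} of $u$. Substituting $S_{\{1\}}=-2^{k-2}$ and $S_{\{2\}}=2^{k-2}$ gives $n_{B_1'}=n_{B_1}-2^{k-2}u=n_{B_2}$ and $n_{B_2'}=n_{B_2}+2^{k-2}u=n_{B_1}$, establishing \eqref{eq:n_Bflip}. Finally, \eqref{eq:n_Bflip} says the multiset $\{n_{B_I}\}_I$ coincides with $\{n_{B_I'}\}_I$ (the flip merely transposes the values at $B_1$ and $B_2$), so $c_s(F)=c_s(F')$ for every $s$; by \eqref{eq:patron} this is exactly the almost-conjugacy of $\rho$ and $\rho'$. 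The only genuinely delicate step is the parity-counting governing $S_J$ in the ``exactly one of $1,2$'' case; the rest is bookkeeping with the formula for $n_{B_J}$.
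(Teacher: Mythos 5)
Your proof is correct and follows essentially the same route as the paper's: both reduce everything to the formula $n_{B_J}=\sum_{I:\,|I\cap J|\ \mathrm{even}}q_{_I}$, write $n_{B_J'}-n_{B_J}=u\,S_J$ with $S_J=\sum_{|I\cap J|\ \mathrm{even}}\delta_{_I}$, and evaluate $S_J$ by counting subsets, using the definition of $u$ to handle $J=\{1\},\{2\}$. Your involution $\sigma$ is just a clean packaging of the counting step the paper dismisses as ``easily seen''; in fact it is the more careful version, since the two sums in the paper's final display are equal to each other but their common value depends on $I$ and is not always $2^{k-2}$ as stated there, a harmless slip your parity argument avoids.
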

\begin{proof}
It suffices to verify that \eqref{eq:n_Bflip} holds for the flip $\rho'$ of $\rho$.
We will use the following facts. First, for each  $I\subset\{1,\dots,k\}$ we have that
\begin{align}\label{eq:n_Bfla}
  n_{B_I }&=\sum_{J:\chi_{_J}(f_I)=1} q_{_J}, &
  n_{B_I'}&=\sum_{J:\chi_{_J}(f_I)=1} (q_{_J} +u\delta_{_J}).
\end{align}
Secondly, if $I=\{i_1,\dots,i_s\}$ then
\begin{equation}\label{eq:chi_J(f_I)}
\chi_{_J}(f_I)=\chi_{_J}(f_{i_1})\dots \chi_{_J}(f_{i_s})=(-1)^{\#(J\cap I)},
\end{equation}
since $\chi_{_J}(f_{j})=-1$ if and only if $j\in J$.

Now, using \eqref{eq:sigma}, \eqref{eq:n_Bfla} and \eqref{eq:chi_J(f_I)} we have that
\begin{align*}
n_{B_2'}&= \sum_{2\notin J} (q_{_J}+u\delta_{_J})
         = \sum_{1\notin J,\, 2\notin J} (q_{_J}+ {u\delta_{_J}}) + \sum_{1\in J, \, 2\notin J} (q_{_J}+{u\delta_{_J}})\\
        &= \sum_{1\notin J,\, 2\notin J} q_{_J} + \sum_{1\in J, \, 2\notin J} q_{_J}+ 2^{k-2}u,
\end{align*}
since there are exactly $2^{k-2}$ subsets of $\{1,\dots,k\}$ containing $1$ and not $2$.
Using \eqref{eq:u} we conclude that
\begin{align*}
n_{B_2'} = \sum_{1\notin J,\, 2\notin J} q_{_J} + \sum_{1\notin J,\, 2\in J} q_{_J} = \sum_{1\notin J}q_{_J} = n_{B_1}.
\end{align*}
By arguing in the same way we can check that $n_{B_{1}'}=n_{B_{2}}$.

Now, for $I\subset\{1,\dots,k\}$, by \eqref{eq:n_Bfla} it follows that
\begin{align*}
n_{B_{I}'} - n_{B_{I}}&=\sum_{\chi_{_J}(f_I)=1} u\delta_{_J} =u\left(\sum_{\chi_{_J}(f_I)=1\atop 1\in J,\, 2\notin J} 1 -\sum_{\chi_{_J}(f_I)=1\atop 1\notin J,\, 2\in J} 1\right).
\end{align*}
It is now easily seen that if $I\neq\{1\},\{2\}$, the sums in the right-hand side are both equal to $2^{k-2}$.
This completes the proof of the theorem.
\end{proof}

\begin{rem}
(i) We note that it is possible to use any pair $I_1,\;I_2$ of nonempty subsets of $\{1,\dots,k\}$ in place of $I_1=\{1\},\;I_2=\{2\}$, to produce a flip.
Seldom it may be possible to apply two or more different flips to a representation, producing families of almost-conjugate representations.

(ii) If $u=0$ then $\rho$ and its flip representation $\rho'$ coincide.
When $u\neq0$, in some rare cases the representations $\rho$ and $\rho'$ may turn out to be equivalent.
To show an example, if we choose $\rho=\chi_{1}+2\chi_{2}+\chi_{3}+\chi_{12}+\chi_{23}$, we obtain $\rho'=2\chi_{1}+\chi_{2}+\chi_{3}+\chi_{12}+\chi_{13}$ which is equivalent to $\rho$.
\end{rem}

\newcommand{\espaciocol}{\hspace{5mm}}
\newcommand{\FF}[1]{\!\!\mathcal F^{3,{#1}}_}
\begin{table}[hbt]
\caption{All families $\mathcal F^{k,{n}}_j$ of almost-conjugate diagonal representations of $\Z_2^k$ of dimension $n\leq11$.}
\label{tabla:pairs3}
\vspace{3mm}
$
\begin{array}{c@{\espaciocol}c@{}l@{\espaciocol}c@{}l@{\espaciocol}c@{}l@{\,}}
n&&\hfill\textrm{Reps.}\hfill P_{4,F}\,&&\hfill\textrm{Reps.}\hfill P_{4,F}\,&&\hfill\textrm{Reps.}\hfill P_{4,F}\,\\ \hline\hline
%
% n=7
%
7&
\FF71&
 \begin{array}{@{[}c@{,}c@{,}c@{,}c@{,}c@{,}c@{,}c@{]\;\;}l}
 3 & 1 & 1 & 1 & 0 & 1 & 0 & 3 \\ 2 & 2 & 2 & 1 & 0 & 0 & 0 & 0
 \end{array}&&&&\\ \hline\hline
%
% n=8
%
8&
\FF81 &
 \begin{array}{@{[}c@{,}c@{,}c@{,}c@{,}c@{,}c@{,}c@{]\;\;}l}
 3 & 2 & 1 & 1 & 0 & 1 & 0 & 3 \\ 2 & 2 & 2 & 2 & 0 & 0 & 0 & 0
 \end{array}   &
\FF82 &
 \begin{array}{@{[}c@{,}c@{,}c@{,}c@{,}c@{,}c@{,}c@{]\;\;}l}
 3 & 1 & 1 & 1 & 1 & 1 & 0 & 7 \\ 2 & 2 & 2 & 1 & 1 & 0 & 0 & 4
 \end{array}&&\\ \hline\hline
%
% n=9
%
&
\FF91 &
   \begin{array}{@{[}c@{,}c@{,}c@{,}c@{,}c@{,}c@{,}c@{]\;\;}l}
   4 & 2 & 1 & 1 & 0 & 0 & 1 & 8 \\ 3 & 3 & 2 & 1 & 0 & 0 & 0 & 0
   \end{array}   &
\FF93 &
   \begin{array}{@{[}c@{,}c@{,}c@{,}c@{,}c@{,}c@{,}c@{]\;\;}l}
   3 & 3 & 1 & 1 & 1 & 0 & 0 & 3 \\ 3 & 2 & 2 & 2 & 0 & 0 & 0 & 0
   \end{array}&
\FF95 &
   \begin{array}{@{[}c@{,}c@{,}c@{,}c@{,}c@{,}c@{,}c@{]\;\;}l}
   3 & 1 & 1 & 1 & 1 & 1 & 1 & 15 \\ 2 & 2 & 2 & 1 & 1 & 1 & 0 & 12
   \end{array}\\ \cline{2-7}
\raisebox{2.5ex}[0pt]{9}&
\FF92 &
   \begin{array}{@{[}c@{,}c@{,}c@{,}c@{,}c@{,}c@{,}c@{]\;\;}l}
   4 & 2 & 1 & 0 & 1 & 1 & 0 & 8 \\ 3 & 3 & 2 & 0 & 1 & 0 & 0 & 0
   \end{array}&
\FF94 &
   \begin{array}{@{[}c@{,}c@{,}c@{,}c@{,}c@{,}c@{,}c@{]\;\;}l}
   3 & 2 & 1 & 1 & 1 & 1 & 0 & 11 \\ 2 & 2 & 2 & 2 & 1 & 0 & 0 & 8
   \end{array}&&\\ \hline\hline
%
% n=10
%
&
\FF{10}1 &
   \begin{array}{@{[}c@{,}c@{,}c@{,}c@{,}c@{,}c@{,}c@{]\;\;}l}
    4 & 3 & 1 & 1 & 1 & 0 & 0 & 3 \\ 4 & 2 & 2 & 2 & 0 & 0 & 0 & 0
   \end{array} &
\FF{10}4 &
   \begin{array}{@{[}c@{,}c@{,}c@{,}c@{,}c@{,}c@{,}c@{]\;\;}l}
    4 & 2 & 1 & 1 & 1 & 1 & 0 & 14 \\ 3 & 3 & 2 & 1 & 1 & 0 & 0 & 6
   \end{array} &
\FF{10}7 &
   \begin{array}{@{[}c@{,}c@{,}c@{,}c@{,}c@{,}c@{,}c@{]\;\;}l}
    3 & 2 & 2 & 1 & 1 & 0 & 1 & 19 \\ 2 & 2 & 2 & 2 & 2 & 0 & 0 & 16
   \end{array} \\ \cline{2-7}
10&
\FF{10}2 &
    \begin{array}{@{[}c@{,}c@{,}c@{,}c@{,}c@{,}c@{,}c@{]\;\;}l}
    4 & 2 & 2 & 1 & 0 & 1 & 0 & 8 \\ 3 & 3 & 2 & 0 & 2 & 0 & 0 & 0
   \end{array} &
\FF{10}5 &
    \begin{array}{@{[}c@{,}c@{,}c@{,}c@{,}c@{,}c@{,}c@{]\;\;}l}
    4 & 2 & 1 & 0 & 1 & 1 & 1 & 17 \\ 3 & 3 & 2 & 0 & 1 & 1 & 0 & 9
   \end{array} &
\FF{10}8 &
    \begin{array}{@{[}c@{,}c@{,}c@{,}c@{,}c@{,}c@{,}c@{]\;\;}l}
     3 & 2 & 1 & 1 & 1 & 1 & 1 & 23 \\ 2 & 2 & 2 & 2 & 1 & 1 & 0 & 20
    \end{array} \\ \cline{2-7}
&
\FF{10}3 &
    \begin{array}{@{[}c@{,}c@{,}c@{,}c@{,}c@{,}c@{,}c@{]\;\;}l}
    4 & 2 & 1 & 2 & 0 & 1 & 0 & 8 \\ 3 & 3 & 2 & 2 & 0 & 0 & 0 & 0
   \end{array} &
\FF{10}6 &
    \begin{array}{@{[}c@{,}c@{,}c@{,}c@{,}c@{,}c@{,}c@{]\;\;}l}
    3 & 3 & 1 & 1 & 1 & 1 & 0 & 15 \\ 3 & 2 & 2 & 2 & 0 & 1 & 0 & 12
   \end{array} &&\\ \hline\hline
%
% n=11
%
&
\FF{11}1 &
  \begin{array}{@{[}c@{,}c@{,}c@{,}c@{,}c@{,}c@{,}c@{]\;\;}l}
  5 & 3 & 1 & 1 & 1 & 0 & 0 & 3 \\ 5 & 2 & 2 & 2 & 0 & 0 & 0 & 0
  \end{array} &
\FF{11}7 &
  \begin{array}{@{[}c@{,}c@{,}c@{,}c@{,}c@{,}c@{,}c@{]\;\;}l}
   4 & 3 & 1 & 2 & 0 & 1 & 0 & 8 \\ 3 & 3 & 2 & 3 & 0 & 0 & 0 & 0
 \end{array} &
\FF{11}{12} &
  \begin{array}{@{[}c@{,}c@{,}c@{,}c@{,}c@{,}c@{,}c@{]\;\;}l}
   4 & 2 & 2 & 1 & 0 & 1 & 1 & 26 \\3 & 3 & 2 & 0 & 2 & 1 & 0 & 18
   \end{array} \\ \cline{2-7}
&
\FF{11}2 &
  \begin{array}{@{[}c@{,}c@{,}c@{,}c@{,}c@{,}c@{,}c@{]\;\;}l}
  5 & 3 & 1 & 1 & 0 & 0 & 1 & 15\\ 4 & 4 & 2 & 1 & 0 & 0 & 0 & 0
  \end{array} &
\FF{11}8 &
  \begin{array}{@{[}c@{,}c@{,}c@{,}c@{,}c@{,}c@{,}c@{]\;\;}l}
   4 & 3 & 1 & 1 & 1 & 1 & 0 & 19 \\ 4 & 2 & 2 & 2 & 0 & 1 & 0 & 16
   \end{array} &
\FF{11}{13} &
  \begin{array}{@{[}c@{,}c@{,}c@{,}c@{,}c@{,}c@{,}c@{]\;\;}l}
   4 & 2 & 1 & 1 & 1 & 1 & 1 & 29 \\3 & 3 & 2 & 1 & 1 & 1 & 0 & 21
   \end{array} \\ \cline{2-7}
&
\FF{11}3 &
  \begin{array}{@{[}c@{,}c@{,}c@{,}c@{,}c@{,}c@{,}c@{]\;\;}l}
   5 & 3 & 1 & 0 & 1 & 1 & 0 & 15 \\ 4 & 4 & 2 & 0 & 1 & 0 & 0 & 0
   \end{array} &
\FF{11}9 &
  \begin{array}{@{[}c@{,}c@{,}c@{,}c@{,}c@{,}c@{,}c@{]\;\;}l}
   4 & 2 & 2 & 1 & 0 & 0 & 2 & 32 \\3 & 3 & 3 & 1 & 0 & 0 & 1 & 27
   \end{array} &
\FF{11}{14} &
  \begin{array}{@{[}c@{,}c@{,}c@{,}c@{,}c@{,}c@{,}c@{]\;\;}l}
   3 & 3 & 2 & 1 & 1 & 0 & 1 & 27 \\ 3 & 2 & 2 & 2 & 0 & 2 & 0 & 24
   \end{array} \\ \cline{2-7}
\raisebox{2.5ex}[0pt]{11}&
\FF{11}4 &
  \begin{array}{@{[}c@{,}c@{,}c@{,}c@{,}c@{,}c@{,}c@{]\;\;}l}
   5 & 2 & 2 & 1 & 0 & 0 & 1 & 20 \\ 4 & 3 & 3 & 1 & 0 & 0 & 0 & 0
   \end{array} &
\FF{11}{10} &
  \begin{array}{@{[}c@{,}c@{,}c@{,}c@{,}c@{,}c@{,}c@{]\;\;}l}
   4 & 2 & 1 & 2 & 1 & 1 & 0 & 20 \\ 3 & 3 & 2 & 2 & 1 & 0 & 0  & 12
   \end{array} &
\FF{11}{15} &
  \begin{array}{@{[}c@{,}c@{,}c@{,}c@{,}c@{,}c@{,}c@{]\;\;}l}
   3 & 3 & 1 & 1 & 1 & 1 & 1 & 31 \\ 3 & 2 & 2 & 2 & 0 & 1 & 1 & 28
   \end{array} \\ \cline{2-7}
&
\FF{11}5 &
  \begin{array}{@{[}c@{,}c@{,}c@{,}c@{,}c@{,}c@{,}c@{]\;\;}l}
   5 & 2 & 2 & 0 & 0 & 1 & 1 & 20 \\ 4 & 3 & 3 & 0 & 0 & 1 & 0 & 0
   \end{array} &
\FF{11}{11} &
  \begin{array}{@{[}c@{,}c@{,}c@{,}c@{,}c@{,}c@{,}c@{]\;\;}l}
   4 & 2 & 2 & 1 & 1 & 1 & 0 & 20 \\ 3 & 3 & 2 & 1 & 2 & 0 & 0  & 12
   \end{array} &
\FF{11}{16} &
  \begin{array}{@{[}c@{,}c@{,}c@{,}c@{,}c@{,}c@{,}c@{]\;\;}l}
   3 & 2 & 2 & 1 & 1 & 1 & 1 & 35 \\ 2 & 2 & 2 & 2 & 2 & 1 & 0 & 32
   \end{array}\\ \cline{2-7}
&
\FF{11}6 &
  \begin{array}{@{[}c@{,}c@{,}c@{,}c@{,}c@{,}c@{,}c@{]\;\;}l}
   4 & 3 & 2 & 1 & 0 & 1 & 0 & 8 \\ 3 & 3 & 3 & 2 & 0 & 0 & 0 & 0
   \end{array} &&&&\\ \hline
\end{array}
$
\end{table}

\newcommand{\espacio}{\hspace{1ex}}
\renewcommand{\FF}[1]{\!\!\widetilde{\mathcal F}^{3,{#1}}_}
\begin{table}[hbt]
\caption{All families $\widetilde{\mathcal F}^{k,n}_j$ with more than two elements of almost-conjugate diagonal representations of $\Z_2^3$ of dimension $n\leq15$.}
\label{tabla:flias3}
\vspace{3mm}
$
\begin{array}{c@{\espaciocol}c@{}l@{\espaciocol}c@{}l@{\espaciocol}c@{}l@{\,}}
n&&\hfill\textrm{Reps.}\hfill P_4\;&&\hfill\textrm{Reps.}\hfill P_4\;&&\hfill\textrm{Reps.}\hfill P_4\; \\ \hline\hline
%
% n = 12
%
12&
\FF{12}{1}&
\begin{array}{@{[}c@{,}c@{,}c@{,}c@{,}c@{,}c@{,}c@{]\;\;}r}
5 & 3 & 1 & 1 & 1 & 1 & 0 & 23 \\
5 & 2 & 2 & 2 & 0 & 1 & 0 & 20 \\
4 & 4 & 2 & 1 & 1 & 0 & 0 & 8 \\
4 & 3 & 3 & 2 & 0 & 0 & 0 & 0
\end{array}&
\FF{12}{2}&
\begin{array}{@{[}c@{,}c@{,}c@{,}c@{,}c@{,}c@{,}c@{]\;\;}r}
4 & 3 & 2 & 1 & 0 & 1 & 1 & 35 \\
4 & 2 & 2 & 2 & 0 & 2 & 0 & 32 \\
3 & 3 & 3 & 2 & 0 & 0 & 1 & 27
\end{array}&
\FF{12}{3}&
\begin{array}{@{[}c@{,}c@{,}c@{,}c@{,}c@{,}c@{,}c@{]\;\;}r}
4 & 2 & 2 & 1 & 1 & 0 & 2 & \espacio44 \\
3 & 3 & 3 & 1 & 1 & 0 & 1 & 39 \\
3 & 3 & 2 & 0 & 2 & 2 & 0 & 36
\end{array}\\ \hline\hline
%
% n = 13
%
13&
\FF{13}{1}&
\begin{array}{@{[}c@{,}c@{,}c@{,}c@{,}c@{,}c@{,}c@{]\;\;}r}
5 & 3 & 1 & 1 & 1 & 1 & 1 & 47 \\
5 & 2 & 2 & 2 & 0 & 1 & 1 & 44 \\
4 & 4 & 2 & 1 & 1 & 1 & 0 & 32 \\
4 & 3 & 3 & 2 & 0 & 1 & 0 & 24
\end{array}&
\FF{13}{2}&
\begin{array}{@{[}c@{,}c@{,}c@{,}c@{,}c@{,}c@{,}c@{]\;\;}r}
4 & 3 & 2 & 1 & 1 & 1 & 1 & 59 \\
4 & 2 & 2 & 2 & 1 & 2 & 0 & 56 \\
3 & 3 & 3 & 2 & 1 & 0 & 1 & 51
\end{array}&
\FF{13}{3}&
\begin{array}{@{[}c@{,}c@{,}c@{,}c@{,}c@{,}c@{,}c@{]\;\;}r}
4 & 2 & 2 & 1 & 1 & 1 & 2 & \espacio68 \\
3 & 3 & 3 & 1 & 1 & 1 & 1 & 63 \\
3 & 3 & 2 & 1 & 2 & 2 & 0 & 60
\end{array}\\ \hline\hline
%
% n = 14
%
&
\FF{14}{1}&
\begin{array}{@{[}c@{,}c@{,}c@{,}c@{,}c@{,}c@{,}c@{]\;\;}r}
6 & 3 & 2 & 1 & 0 & 1 & 1 & 51 \\
6 & 2 & 2 & 2 & 0 & 2 & 0 & 48 \\
5 & 4 & 3 & 1 & 0 & 1 & 0 & 15 \\
4 & 4 & 4 & 2 & 0 & 0 & 0 & 0
\end{array}&
\FF{14}{3}&
\begin{array}{@{[}c@{,}c@{,}c@{,}c@{,}c@{,}c@{,}c@{]\;\;}r}
5 & 3 & 2 & 2 & 1 & 1 & 0 & 47 \\
4 & 4 & 2 & 2 & 2 & 0 & 0 & 32 \\
4 & 3 & 3 & 3 & 1 & 0 & 0 & 27
\end{array}&
\FF{14}{5}&
\begin{array}{@{[}c@{,}c@{,}c@{,}c@{,}c@{,}c@{,}c@{]\;\;}r}
5 & 3 & 2 & 1 & 1 & 1 & 1 & \espacio71 \\
5 & 2 & 2 & 2 & 1 & 2 & 0 & 68 \\
4 & 4 & 2 & 1 & 2 & 1 & 0 & 56 \\
4 & 3 & 3 & 2 & 0 & 2 & 0 & 48
\end{array}
\\ \cline{2-7}
14&
\FF{14}{2}&
\begin{array}{@{[}c@{,}c@{,}c@{,}c@{,}c@{,}c@{,}c@{]\;\;}r}
5 & 3 & 3 & 1 & 0 & 1 & 1 & 63 \\
5 & 3 & 2 & 0 & 2 & 2 & 0 & 60 \\
4 & 4 & 3 & 0 & 2 & 0 & 1 & 48
\end{array}&
\FF{14}{4}&
\begin{array}{@{[}c@{,}c@{,}c@{,}c@{,}c@{,}c@{,}c@{]\;\;}r}
5 & 3 & 2 & 0 & 1 & 1 & 2 & 79 \\
4 & 4 & 3 & 0 & 1 & 1 & 1 & 67 \\
4 & 4 & 2 & 0 & 2 & 2 & 0 & 64
\end{array}&
\FF{14}{6}&
\begin{array}{@{[}c@{,}c@{,}c@{,}c@{,}c@{,}c@{,}c@{]\;\;}r}
4 & 3 & 2 & 1 & 2 & 1 & 1 & \espacio83 \\
4 & 2 & 2 & 2 & 2 & 2 & 0 & 80 \\
3 & 3 & 3 & 2 & 2 & 0 & 1 & 75
\end{array}
\\ \cline{2-7}
&
&
&
&
&
\FF{14}{7}&
\begin{array}{@{[}c@{,}c@{,}c@{,}c@{,}c@{,}c@{,}c@{]\;\;}r}
4 & 2 & 2 & 2 & 1 & 2 & 1 & \espacio92 \\
3 & 3 & 3 & 2 & 1 & 1 & 1 & 87 \\
3 & 3 & 2 & 2 & 2 & 2 & 0 & 84
\end{array}
\\ \hline\hline
%
% n = 15
%
&
\FF{15}{1}&
\begin{array}{@{[}c@{,}c@{,}c@{,}c@{,}c@{,}c@{,}c@{]\;\;}r}
6 & 4 & 1 & 2 & 1 & 1 & 0 & 44 \\
6 & 3 & 2 & 3 & 0 & 1 & 0 & 36 \\
5 & 5 & 2 & 2 & 1 & 0 & 0 & 20 \\
5 & 4 & 3 & 3 & 0 & 0 & 0 & 0
\end{array}&
\FF{15}{5}&
\begin{array}{@{[}c@{,}c@{,}c@{,}c@{,}c@{,}c@{,}c@{]\;\;}r}
5 & 4 & 2 & 2 & 0 & 1 & 1 & 68 \\
5 & 3 & 2 & 3 & 0 & 2 & 0 & 60 \\
4 & 4 & 3 & 3 & 0 & 0 & 1 & 48
\end{array}&
\FF{15}{9}&
\begin{array}{@{[}c@{,}c@{,}c@{,}c@{,}c@{,}c@{,}c@{]\;\;}r}
5 & 3 & 2 & 1 & 1 & 1 & 2 & 111 \\
4 & 4 & 3 & 1 & 1 & 1 & 1 & 99 \\
4 & 4 & 2 & 1 & 2 & 2 & 0 & 96
\end{array}
\\ \cline{2-7}
&
\FF{15}{2}&
\begin{array}{@{[}c@{,}c@{,}c@{,}c@{,}c@{,}c@{,}c@{]\;\;}r}
6 & 4 & 2 & 1 & 1 & 1 & 0 & 44 \\
6 & 3 & 3 & 2 & 0 & 1 & 0 & 36 \\
5 & 5 & 2 & 1 & 2 & 0 & 0 & 20 \\
5 & 4 & 3 & 0 & 3 & 0 & 0 & 0
\end{array}&
\FF{15}{6}&
\begin{array}{@{[}c@{,}c@{,}c@{,}c@{,}c@{,}c@{,}c@{]\;\;}r}
5 & 3 & 3 & 1 & 1 & 1 & 1 & 95 \\
5 & 3 & 2 & 1 & 2 & 2 & 0 & 92 \\
4 & 4 & 3 & 1 & 2 & 0 & 1 & 80 \\
4 & 3 & 3 & 2 & 0 & 3 & 0 & 72
\end{array}&
\FF{15}{10}&
\begin{array}{@{[}c@{,}c@{,}c@{,}c@{,}c@{,}c@{,}c@{]\;\;}r}
4 & 3 & 3 & 2 & 1 & 1 & 1 & 107 \\
4 & 3 & 2 & 2 & 2 & 2 & 0 & 104 \\
3 & 3 & 3 & 3 & 2 & 1 & 0 & 99
\end{array}
\\ \cline{2-7}
\raisebox{6.5ex}[0pt]{15}&
\FF{15}{3}&
\begin{array}{@{[}c@{,}c@{,}c@{,}c@{,}c@{,}c@{,}c@{]\;\;}r}
6 & 3 & 2 & 1 & 1 & 1 & 1 & 83 \\
6 & 2 & 2 & 2 & 1 & 2 & 0 & 80 \\
5 & 4 & 3 & 1 & 1 & 1 & 0 & 47 \\
4 & 4 & 4 & 2 & 1 & 0 & 0 & 32
\end{array}&
\FF{15}{7}&
\begin{array}{@{[}c@{,}c@{,}c@{,}c@{,}c@{,}c@{,}c@{]\;\;}r}
5 & 3 & 2 & 2 & 1 & 0 & 2 & 92 \\
4 & 4 & 3 & 2 & 1 & 0 & 1 & 80 \\
4 & 3 & 3 & 3 & 0 & 2 & 0 & 72
\end{array}&
\FF{15}{11}&
\begin{array}{@{[}c@{,}c@{,}c@{,}c@{,}c@{,}c@{,}c@{]\;\;}r}
4 & 3 & 2 & 2 & 1 & 2 & 1 & 116 \\
3 & 3 & 3 & 3 & 1 & 1 & 1 & 111 \\
3 & 3 & 2 & 3 & 2 & 2 & 0 & 108
\end{array}
\\ \cline{2-7}
&
\FF{15}{4}&
\begin{array}{@{[}c@{,}c@{,}c@{,}c@{,}c@{,}c@{,}c@{]\;\;}r}
5 & 4 & 2 & 1 & 1 & 2 & 0 & 68 \\
5 & 3 & 3 & 2 & 0 & 2 & 0 & 60 \\
4 & 4 & 3 & 0 & 3 & 1 & 0 & 48
\end{array}&
\FF{15}{8}&
\begin{array}{@{[}c@{,}c@{,}c@{,}c@{,}c@{,}c@{,}c@{]\;\;}r}
5 & 3 & 2 & 2 & 1 & 1 & 1 & 95 \\
4 & 4 & 2 & 2 & 2 & 1 & 0 & 80 \\
4 & 3 & 3 & 3 & 1 & 1 & 0 & 75
\end{array}&
&\\ \hline
\end{array}
$
\end{table}

\renewcommand{\FF}[1]{\!\!\mathcal F^{4,{#1}}_}
\renewcommand{\espacio}{\hspace{-1.1ex}}

% -----------------------------------------------
% -----------------------------------------------
%           k = 4
% -----------------------------------------------
% -----------------------------------------------
\begin{table}[hbt]
\caption{All families $\mathcal F^{k,{n}}_j$ of almost-conjugate diagonal representations of $\Z_2^4$ of dimension $n\leq9$.}
\label{tabla:pairs4}
\vspace{3mm}
$
\begin{array}{c@{\espaciocol}c@{}l@{\espaciocol}c@{}l@{}}
n&&\hfill\textrm{Reps.}\hfill P_{4}\;\,P_{5}\;&&\hfill\textrm{Reps.}\hfill P_{4}\;\,P_{5}\;\\ \hline\hline
%
% n=7
%
7&
\FF71 &
 \begin{array}{@{[}c@{,}c@{,}c@{,}c@{,}c@{,}c@{,}c@{,}c@{,}c@{,}c@{,}c@{,}c@{,}c@{,}c@{,}c@{]\;}ll}
 2& 1& 1& 1& 0& 0& 0& 1& 1& 0& 0& 0& 0&0&0&1 & 0  \\
 2& 1& 1& 1& 1& 0& 0& 0& 0& 0& 0& 0& 0&1&0&1 & 2
 \end{array}
&&
\\ \hline\hline
%
% n=8
%
&
\FF81 &
 \begin{array}{@{[}c@{,}c@{,}c@{,}c@{,}c@{,}c@{,}c@{,}c@{,}c@{,}c@{,}c@{,}c@{,}c@{,}c@{,}c@{]\;}ll}
 3& 1& 1& 1& 1& 0& 0& 1& 0& 0& 0& 0& 0&0&0&3 & 0  \\
 2& 2& 2& 1& 1& 0& 0& 0& 0& 0& 0& 0& 0&0&0&0 & 0
 \end{array}
 &
\FF83 &
 \begin{array}{@{[}c@{,}c@{,}c@{,}c@{,}c@{,}c@{,}c@{,}c@{,}c@{,}c@{,}c@{,}c@{,}c@{,}c@{,}c@{]\;}ll}
2 & 1 & 1 & 1 & 1 & 0 & 0 & 1 & 0 & 1 & 0 & 0 & 0 & 0 & 0 & 3 & 2 \\
2 & 1 & 1 & 1 & 1 & 1 & 0 & 0 & 0 & 0 & 0 & 0 & 0 & 1 & 0 & 3 & 4 \\
2 & 1 & 1 & 1 & 0 & 0 & 0 & 1 & 1 & 1 & 0 & 0 & 0 & 0 & 0 & 3 & 0
 \end{array}
\\ \cline{2-5}
\raisebox{3.5ex}[0pt]{8}
&\FF82 &
 \begin{array}{@{[}c@{,}c@{,}c@{,}c@{,}c@{,}c@{,}c@{,}c@{,}c@{,}c@{,}c@{,}c@{,}c@{,}c@{,}c@{]\;}ll}
2 & 2 & 1 & 1 & 0 & 1 & 1 & 0 & 0 & 0 & 0 & 0 & 0 & 0 & 0 & 1 & 0 \\
2 & 2 & 1 & 1 & 1 & 0 & 0 & 0 & 0 & 0 & 0 & 0 & 0 & 0 & 1 & 1 & 4
 \end{array}&
&
 \\ \hline\hline
%
% n=9
%
&\FF91 &
 \begin{array}{@{[}c@{,}c@{,}c@{,}c@{,}c@{,}c@{,}c@{,}c@{,}c@{,}c@{,}c@{,}c@{,}c@{,}c@{,}c@{]\;}ll}
3 & 2 & 1 & 1 & 1 & 0 & 0 & 1 & 0 & 0 & 0 & 0 &0&0&0& 3 & 0\\
2 & 2 & 2 & 1 & 2 & 0 & 0 & 0 & 0 & 0 & 0 & 0 &0&0&0& 0 & 0
 \end{array}
&
 \FF98 &
 \begin{array}{@{[}c@{,}c@{,}c@{,}c@{,}c@{,}c@{,}c@{,}c@{,}c@{,}c@{,}c@{,}c@{,}c@{,}c@{,}c@{]\;}ll}
3 & 1 & 1 & 1 & 1 & 0 & 0 & 1 & 0 & 0 & 0 & 0 & 1 & 0 & 0 & 7 & 4 \\
2 & 2 & 2 & 1 & 0 & 1 & 0 & 0 & 0 & 0 & 0 & 1 & 0 & 0 & 0 & 4 & 4 \\
2 & 2 & 2 & 1 & 0 & 0 & 1 & 0 & 1 & 0 & 0 & 0 & 0 & 0 & 0 & 4 & 0
 \end{array} \\ \cline{2-5}
&\FF92 &
 \begin{array}{@{[}c@{,}c@{,}c@{,}c@{,}c@{,}c@{,}c@{,}c@{,}c@{,}c@{,}c@{,}c@{,}c@{,}c@{,}c@{]\;}ll}
3 & 2 & 1 & 1 & 0 & 1 & 1 & 0 & 0 & 0 & 0 & 0 &0&0&0& 1 & 0 \\
3 & 2 & 1 & 1 & 1 & 0 & 0 & 0 & 0 & 0 & 0 & 0 &0&0&1& 1 & 6
 \end{array}
&\FF99 &
 \begin{array}{@{[}c@{,}c@{,}c@{,}c@{,}c@{,}c@{,}c@{,}c@{,}c@{,}c@{,}c@{,}c@{,}c@{,}c@{,}c@{]\;}ll}
3 & 1 & 1 & 1 & 1 & 1 & 0 & 0 & 0 & 0 & 0 & 0 & 0 & 1 & 0 & 3 & 6 \\
2 & 2 & 2 & 1 & 0 & 0 & 1 & 1 & 0 & 0 & 0 & 0 & 0 & 0 & 0 & 0 & 0
 \end{array} \\ \cline{2-5}
&\FF93 &
 \begin{array}{@{[}c@{,}c@{,}c@{,}c@{,}c@{,}c@{,}c@{,}c@{,}c@{,}c@{,}c@{,}c@{,}c@{,}c@{,}c@{]\;}ll}
3 & 2 & 1 & 1 & 0 & 1 & 0 & 0 & 0 & 1 & 0 & 0 &0&0&0& 3 & 0 \\
2 & 2 & 2 & 2 & 1 & 0 & 0 & 0 & 0 & 0 & 0 & 0 &0&0&0& 0 & 0
 \end{array}
&\FF9{10} &
 \begin{array}{@{[}c@{,}c@{,}c@{,}c@{,}c@{,}c@{,}c@{,}c@{,}c@{,}c@{,}c@{,}c@{,}c@{,}c@{,}c@{]\;}ll}
3 & 1 & 1 & 1 & 0 & 0 & 0 & 1 & 1 & 0 & 0 & 0 & 1 & 0 & 0 & 7 & 6 \\
2 & 2 & 2 & 1 & 0 & 0 & 1 & 0 & 0 & 0 & 0 & 0 & 0 & 0 & 1 & 4 & 8
 \end{array} \\ \cline{2-5}
9&\FF94 &
 \begin{array}{@{[}c@{,}c@{,}c@{,}c@{,}c@{,}c@{,}c@{,}c@{,}c@{,}c@{,}c@{,}c@{,}c@{,}c@{,}c@{]\;}ll}
3 & 2 & 1 & 1 & 0 & 0 & 0 & 1 & 0 & 1 & 0 & 0 &0&0&0& 2 & 0 \\
3 & 2 & 1 & 1 & 0 & 1 & 0 & 0 & 0 & 0 & 0 & 0 &0&0&1& 2 & 6
 \end{array}
&\FF9{11} &
 \begin{array}{@{[}c@{,}c@{,}c@{,}c@{,}c@{,}c@{,}c@{,}c@{,}c@{,}c@{,}c@{,}c@{,}c@{,}c@{,}c@{]\;}ll}
2 & 2 & 1 & 1 & 1 & 0 & 0 & 0 & 0 & 1 & 1 & 0 & 0 & 0 & 0 & 5 & 4 \\
2 & 2 & 1 & 1 & 1 & 0 & 0 & 1 & 0 & 0 & 0 & 0 & 1 & 0 & 0 & 5 & 6 \\
2 & 2 & 1 & 1 & 0 & 1 & 1 & 1 & 0 & 0 & 0 & 0 & 0 & 0 & 0 & 5 & 2 \\
2 & 2 & 1 & 1 & 0 & 1 & 1 & 0 & 0 & 1 & 0 & 0 & 0 & 0 & 0 & 5 & 0
 \end{array}\\ \cline{2-5}
&\FF95 &
 \begin{array}{@{[}c@{,}c@{,}c@{,}c@{,}c@{,}c@{,}c@{,}c@{,}c@{,}c@{,}c@{,}c@{,}c@{,}c@{,}c@{]\;}ll}
3 & 1 & 1 & 1 & 1 & 1 & 1 & 0 & 0 & 0 & 0 & 0 &0&0&0& 3 & 0 \\
2 & 2 & 1 & 1 & 2 & 0 & 0 & 0 & 0 & 1 & 0 & 0 &0&0&0& 0 & 0
 \end{array}
&\FF9{12} &
 \begin{array}{@{[}c@{,}c@{,}c@{,}c@{,}c@{,}c@{,}c@{,}c@{,}c@{,}c@{,}c@{,}c@{,}c@{,}c@{,}c@{]\;}ll}
2 & 2 & 1 & 1 & 1 & 0 & 0 & 0 & 0 & 0 & 1 & 0 & 1 & 0 & 0 & 8 & 4 \\
2 & 2 & 1 & 1 & 0 & 1 & 0 & 1 & 0 & 1 & 0 & 0 & 0 & 0 & 0 & 8 & 0
 \end{array} \\ \cline{2-5}
&\FF96 &
 \begin{array}{@{[}c@{,}c@{,}c@{,}c@{,}c@{,}c@{,}c@{,}c@{,}c@{,}c@{,}c@{,}c@{,}c@{,}c@{,}c@{]\;}ll}
3 & 1 & 1 & 1 & 1 & 1 & 0 & 1 & 0 & 0 & 0 & 0 &0&0&0& 7 & 0 \\
2 & 2 & 2 & 1 & 1 & 1 & 0 & 0 & 0 & 0 & 0 & 0 &0&0&0& 4 & 0
 \end{array}
&\FF9{13} &
 \begin{array}{@{[}c@{,}c@{,}c@{,}c@{,}c@{,}c@{,}c@{,}c@{,}c@{,}c@{,}c@{,}c@{,}c@{,}c@{,}c@{]\;}ll}
2 & 1 & 1 & 1 & 1 & 1 & 1 & 0 & 0 & 0 & 0 & 0 & 0 & 1 & 0 & 7 & 8 \\
2 & 1 & 1 & 1 & 1 & 0 & 1 & 1 & 0 & 1 & 0 & 0 & 0 & 0 & 0 & 7 & 6 \\
2 & 1 & 1 & 1 & 1 & 0 & 0 & 1 & 1 & 1 & 0 & 0 & 0 & 0 & 0 & 7 & 4
 \end{array}\\ \cline{2-5}
&\FF97 &
 \begin{array}{@{[}c@{,}c@{,}c@{,}c@{,}c@{,}c@{,}c@{,}c@{,}c@{,}c@{,}c@{,}c@{,}c@{,}c@{,}c@{]\;}ll}
3 & 1 & 1 & 1 & 1 & 0 & 0 & 1 & 1 & 0 & 0 & 0 & 0 & 0 & 0 & 7 & 4 \\
2 & 2 & 2 & 1 & 1 & 0 & 0 & 0 & 0 & 0 & 0 & 1 & 0 & 0 & 0 & 4 & 4
 \end{array}
&\FF9{14} &
 \begin{array}{@{[}c@{,}c@{,}c@{,}c@{,}c@{,}c@{,}c@{,}c@{,}c@{,}c@{,}c@{,}c@{,}c@{,}c@{,}c@{]\;}ll}
2 & 1 & 1 & 1 & 1 & 1 & 0 & 1 & 0 & 0 & 0 & 0 & 0 & 1 & 0 & 7 & 4 \\
2 & 1 & 1 & 1 & 0 & 0 & 0 & 1 & 1 & 1 & 0 & 0 & 0 & 1 & 0 & 7 & 0
 \end{array}
\\ \hline
\end{array}
$%
%\noindent \rule{\textwidth}{1pt}
\end{table}

Now we will introduce an algorithm that allows us to find all families of $n$-di\-men\-sional almost-conjugate representations of $\Z_2^k$, for $k$ and $n$ fixed. Recall from \eqref{eq:patron} that two diagonal representations $\rho$ and $\rho'$ are almost-conjugate if and only if $c_s(\rho)=c_s(\rho')$ for all $0\leq s\leq n$. We will call the $(n+1)$-tuple $(c_0(\rho),\dots,c_n (\rho))$  the \emph{pattern} of $\rho$. The algorithm can be described as follows:

\begin{algorithm} \label{algorithm}
Let $k\geq3$ and $n\in\N$. This algorithm returns all $n$-dimensional non-equivalent diagonal representations of $\Z_2^k$ grouped into sets, where two representation are in the same set if they are almost-conjugate.
\begin{enumerate}
  \item Initialize $\mathtt{patterns}$ and $\mathtt{reps}$ as empty lists. Note that $\mathtt{reps}$ will be a list of lists of representations.
  \item Run over all $n$-dimensional diagonal representations $\rho$ of $\Z_2^k$ and obtain its pattern.
  \item If the pattern of $\rho$ is not in $\mathtt{patterns}$, add it to $\mathtt{patterns}$ at the end and add in $\mathtt{reps}$ a new entry which is a list having  $\rho$ as its only element.
      Otherwise, the pattern of $\rho$ coincides with some entry in $\mathtt{patterns}$, say the $j^{\mathrm{th}}$-entry, then look at the representations occurring in the $j^{\mathrm{th}}$-entry of $\mathtt{reps}$ and check whether any of them is equivalent to  $\rho$.
      If not, then add $\rho$ to this $j^{\mathrm{th}}$-list.
\end{enumerate}
\end{algorithm}

Tables~\ref{tabla:pairs3}, \ref{tabla:flias3}, \ref{tabla:pairs4} show some of the results obtained with the help of a computer.
They contain only representations $\rho$ such that $-\mathrm{Id}_n\notin \mathrm{Im}(\rho)$ and such that $\rho$ has no fixed vectors, (i.e.\ $q_0=0$), since we are mostly interested in manifolds (rather than orbifolds) having first Betti number zero.
For simplicity, we abbreviate by writing, for $k=3,4$ respectively,
\[
\begin{array}{c}
[q_{_1}, q_{_2}, q_{_3}, q_{_{12}}, q_{_{13}}, q_{_{23}}, q_{_{123}}]
		 =  \displaystyle{\sum_I q_{_I}\chi_{_I}}, \\ {}
[q_{_1}, q_{_2}, q_{_3}, q_{_4}, q_{_{12}}, q_{_{13}}, q_{_{14}}, q_{_{23}}, q_{_{24}},  q_{_{34}}, q_{_{123}}, q_{_{124}}, q_{_{134}}, q_{_{234}}, q_{_{1234}}]
		=  \displaystyle{\sum_I q_{_I}\chi_{_I}} .
\end{array}
\]
In the tables, for each representation we include the value of $P_{4,F}$ of primitive forms of degree $4$, when $k=3$, and the values of $P_{4,F}$ and $P_{5,F}$, when $k=4$.

Table~\ref{tabla:pairs3} shows all families of almost-conjugate representations of $\Z_2^3$ of dimension $n\leq11$. They all turn out to be pairs. On the other hand, already for $n=12$, there are $19$ families, of which 16 are pairs.
For reasons of space, in Table~\ref{tabla:flias3} we show, for $n\leq15$, all families having more than two elements, omitting the almost-conjugate pairs.
When the holonomy increases to $\Z_2^4$, the number of families increases too.
For instance, for $n=9$ there are $14$ families, shown in Table~\ref{tabla:pairs4}. For $n=10$ there are $32$ families, one of them containing $6$ representations.

\begin{rem}
(i) One can check that all pairs in Table~\ref{tabla:pairs3} can be obtained in one step by the `flip method' (Theorem~\ref{thm:metodoflip}).
However, this is not always the case when $k=3$, already for $n=12$.
Indeed, it is a simple matter to check that the first and fourth representation $[5,3,1,1,1,1,0]$ and $[4,3,3,2,0,0,0,0]$ in $\widetilde{\mathcal F}^{3,12}_1$ cannot be obtained in one flip. This is the example of minimal dimension with this property for $k=3$.
When $k=4$ the situation is very different. It is easy to check that most of the pairs in Table~\ref{tabla:pairs4}  cannot be obtained by flipping (for instance  ${\mathcal F}^{4,7}_1$, the first pair in the table).

(ii) Recall that Corollary~\ref{coro:sumprim} and Proposition~\ref{prop:prim} tell us that  $P_{4,F}$ (resp.\ $P_{4,F}+P_{5,F}$) is an
invariant of the algebra $\Lambda_F^*$ under isomorphisms when $k=3$ (resp.\ $k=4$) respectively.
We note that in all examples in the tables these numbers are different, showing different rings of invariants.
\end{rem}

\begin{rem}\label{question:open1}
\textbf{Open question.}
The above tables show many examples of families of inequivalent almost-conjugate representations such that the corresponding rings of $F$-invariants are non-isomorphic to each other.
We expect that always, given any pair of  inequivalent almost-conjugate representations, the rings of $F$-invariants in the exterior algebra are not isomorphic to each other. This happens to be true in all examples obtained computationally so far.
\end{rem}

\section{Main results}\label{sect:main}
Our next goal will be to use the results in the previous sections (Corollary~\ref{coro:sumprim}  and Theorem~\ref{thm:metodoflip}) to construct many pairs of Sunada isospectral flat manifolds of diagonal type having  very different cohomology rings.

We shall make use of the following result.

\begin{teo}\cite[Proposition 3.5]{MR2} \cite{Su} \label{thm:sunadanumbers}
Let $\Gamma$ and $\Gamma'$ be Bieberbach groups of diagonal type. Then $M_{\Gamma}$ and $M_{\Gamma'}$ are Sunada isospectral if and only if $c_{s,t}(\Gamma)=c_{s,t}(\Gamma')$ for every $s,t$ (see~\eqref{eq:Sunada}). In this case, they are $p$-isospectral for all $p$.
\end{teo}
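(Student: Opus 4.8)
The plan is to derive the statement from Sunada's theorem \cite{Su}, the only genuine work being the translation of its hypothesis (almost-conjugacy, i.e.\ Gassmann equivalence, of the relevant subgroups) into the equality of the Sunada numbers \eqref{eq:Sunada}. After the standard normalization we may assume $\Lambda=\Z^n$ and that every $\gamma=BL_{b_0}L_\lambda\in\Gamma$ has $b_0\in\{0,\tfrac12\}^n$ and $\lambda\in\Z^n$, and likewise for $\Gamma'$. Writing $T^n=\Z^n\backslash\R^n$, the point group $F=\Gamma/L_\Lambda$ acts on $T^n$ by the affine isometries $\overline{BL_{b_0}}$, freely since $\Gamma$ is torsion-free, and $H_\Gamma:=\{\overline{BL_{b_0}}\}\cong F$ satisfies $H_\Gamma\backslash T^n=M_\Gamma$; similarly $H_{\Gamma'}\backslash T^n=M_{\Gamma'}$. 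Both manifolds are thus quotients of the common Riemannian cover $T^n$.

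To set up Sunada's framework I would take $G$ to be the finite subgroup of $\mathrm{Isom}(T^n)=\mathrm{O}(n,\Z)\ltimes(\R^n/\Z^n)$ generated by $H_\Gamma$, $H_{\Gamma'}$, the signed permutation matrices, and the translations by $\tfrac14\Z^n$; it is finite since it sits inside $\mathrm{O}(n,\Z)\ltimes(\tfrac14\Z^n/\Z^n)$. Recall that $M_\Gamma$ and $M_{\Gamma'}$ are Sunada isospectral exactly when $H_\Gamma$ and $H_{\Gamma'}$ are almost-conjugate in such a $G$, that is, $\#([g]\cap H_\Gamma)=\#([g]\cap H_{\Gamma'})$ for every $G$-conjugacy class $[g]$.

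The heart of the argument is the computation of the $G$-conjugacy classes of the diagonal elements. I claim that two such elements $BL_{b_0}$ and $B'L_{b_0'}$ in normal form are $G$-conjugate if and only if $n_B=n_{B'}$ and $n_{B,\frac12}=n_{B',\frac12}$ (see \eqref{eq:n_B}, \eqref{eq:n_Bhalf}). Conjugation by a translation $L_c$ sends $BL_{b_0}$ to $BL_{b_0+(I-B)c}$, and $(I-B)c$ vanishes on the fixed coordinates of $B$ while equalling $2c_i$ on the moving ones; taking $c\in\tfrac14\Z^n$ therefore erases any half-shift on a moving coordinate without touching $B$ or the shifts on the fixed coordinates, and conjugation by a signed permutation permutes the coordinates (a half-shift $\tfrac12$ being preserved modulo $\Z$). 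Hence every diagonal element is $G$-conjugate to a canonical one determined solely by the number $n-n_B$ of moving coordinates, the number $n_{B,\frac12}$ of fixed coordinates carrying a half-shift, and the number $n_B-n_{B,\frac12}$ of fixed coordinates with no shift; conversely $(n_B,n_{B,\frac12})$ is invariant under all of $\mathrm{Isom}(T^n)$, so elements with different values cannot be conjugate. Consequently, for $g$ of diagonal type with invariant $(s,t)$ one has $\#([g]\cap H_\Gamma)=c_{s,t}(\Gamma)$ and $\#([g]\cap H_{\Gamma'})=c_{s,t}(\Gamma')$, while both intersections are empty when $g$ is not of diagonal type. The Gassmann condition is therefore equivalent to $c_{s,t}(\Gamma)=c_{s,t}(\Gamma')$ for all $s,t$, which is the asserted equivalence.

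Finally, when these numbers agree, Sunada's theorem in its strong form applies to the triple $(G,H_\Gamma,H_{\Gamma'})$ acting on $T^n$ and yields that $M_\Gamma$ and $M_{\Gamma'}$ are strongly isospectral; in particular they are isospectral for the Hodge--Laplace operator on $p$-forms for every $p$, which is the last assertion. I expect the delicate point to be the conjugacy-class computation together with the verification that the chosen $G$ is finite and introduces no spurious identifications, so that $\#([g]\cap H_\Gamma)$ matches $c_{s,t}(\Gamma)$ exactly rather than some coarser or finer quantity.
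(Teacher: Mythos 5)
The paper contains no proof of this theorem: it is imported verbatim from \cite[Proposition 3.5]{MR2} together with Sunada's theorem \cite{Su}, so there is no internal argument to compare yours against; your proposal must be judged against the cited literature, and it is in fact a correct reconstruction of essentially that argument. Concretely, you realize $M_\Gamma$ and $M_{\Gamma'}$ as quotients of the common cover $T^n=\Z^n\backslash\R^n$ by the finite groups $H_\Gamma\cong F$, $H_{\Gamma'}\cong F'$, embed both into the finite group $G$ generated by the signed permutation matrices $\mathrm{GL}_n(\Z)\cap\mathrm{O}(n)$ and the translations by $\tfrac14\Z^n$, and your conjugacy computation is right: conjugation by $L_c$ replaces the translational part $b$ of $BL_b$ by $b+(I-B)c$, so quarter-translations erase half-shifts on moving coordinates, signed permutations rearrange coordinates, and the $G$-conjugacy class of a diagonal-type element in normal form is determined exactly by $(n_B,n_{B,\frac 12})$; since these two numbers are invariant under conjugation by the \emph{full} isometry group of $T^n$, no spurious identifications occur, the class counts are precisely the Sunada numbers, and the Gassmann condition for $(G,H_\Gamma,H_{\Gamma'})$ is equivalent to $c_{s,t}(\Gamma)=c_{s,t}(\Gamma')$ for all $s,t$; the strong form of Sunada's theorem then yields isospectrality on $p$-forms for all $p$. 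The only point you should make explicit is the scope of the ``only if'' direction: your argument shows that almost-conjugacy in \emph{any} finite subgroup of $\mathrm{Isom}(T^n)$ containing $H_\Gamma$ and $H_{\Gamma'}$ forces equality of the Sunada numbers (again by invariance of $(n_B,n_{B,\frac 12})$ under the whole isometry group), which is the sense of ``Sunada isospectral'' used in \cite{Go2} and \cite{MR3}; if one instead allowed Sunada triples over an arbitrary common cover, closing that direction would require an additional argument, for instance via the eigenvalue multiplicity formulas of \cite{MR2}.
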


In particular,  if $\Gamma$ and $\Gamma'$ are two Bieberbach groups of diagonal type such that the flat manifolds $M_\Gamma$ and $M_{\Gamma'}$ are Sunada isospectral, then their associated holonomy representations $\rho_\Gamma$ and $\rho_{\Gamma'}$ are almost-conjugate.
However, the equality in \eqref{eq:patron} does not suffice to imply the equality between the Sunada numbers $c_{s,t}(\Gamma)$ and $c_{s,t}(\Gamma')$ (see \eqref{eq:Sunada}).
An example of this type can be obtained by taking the two $3$-dimensional non-orientable diagonal flat manifolds with holonomy group $\Z_2^2$ called respectively \emph{first} and \emph{second amphidicosm}.
Indeed, they have the same integral representation ($\chi_0+\chi_1+\chi_2$), they are not homeomorphic, they cannot be isospectral for any flat metric on each (see \cite{RC}).
In greater dimensions there are many other examples, for instance the two different Hantzsche-Wendt manifolds in dimension $n=5$ (see~\cite{MR3}).

\begin{lema}\label{lema:Kahler}
Let $\rho=\sum_{I} q_{_I}\chi_{_I}$ be a diagonal representation of $\Z_2^k$, and let $M_\Gamma$ be a compact flat manifold with holonomy representation $\rho$. Then $M_\Gamma$ is orientable if and only if for every $1\le j \le k$,
\begin{equation} \label{eq:orientable}
\sum_{I:j\in I} q_{_I} \,\text{ is even}.
\end{equation}

Moreover,  if $q_{_I}$ is even  for every $I\subset\{1,\dots,k\}$, then $M_\Gamma$ has an invariant K\"ahler structure.
Similarly, if $q_{_I} \in 4 \Z$ for every $I\subset \{1,\dots,k\}$, then $M_\Gamma$ has an invariant hyperk\"ahler structure.
\end{lema}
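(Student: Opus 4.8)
The plan is to treat the three assertions separately: orientability by a determinant computation, and the (hyper)K\"ahler assertions by producing parallel orthogonal (resp.\ quaternionic) complex structures commuting with the holonomy.

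For orientability I would use that $M_\Gamma$ is orientable if and only if its linear holonomy group $F$ lies in $\mathrm{SO}(n)$, equivalently $\det B=1$ for all $B\in F$. Since $\det\colon F\to\{\pm1\}$ is a homomorphism and $F$ is generated by the matrices $B_j=\rho(f_j)$ (Definition~\ref{def:F}), it suffices to check $\det B_j=1$ for $1\le j\le k$. By \eqref{eq:rho} the matrix $B_j$ is diagonal, with the character $\chi_{_I}$ contributing $q_{_I}$ diagonal entries all equal to $\chi_{_I}(f_j)$; using \eqref{eq:chi_J(f_I)} (so $\chi_{_I}(f_j)=-1$ iff $j\in I$) we obtain
\[
\det B_j=\prod_I \chi_{_I}(f_j)^{q_{_I}}=(-1)^{\sum_{I:\,j\in I} q_{_I}}.
\]
Thus $\det B_j=1$ precisely when \eqref{eq:orientable} holds, and orientability follows by ranging over all $j$.

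For the K\"ahler assertion, the key observation I would record is that a $\Gamma$-invariant complex structure on $M_\Gamma$ compatible with the flat metric is the same datum as a linear map $J\in\mathrm{O}(n)$ with $J^2=-\mathrm{Id}_n$ that commutes with every $B\in F$: the differential of $\gamma=BL_b$ is $B$, so invariance of the constant tensor $J$ under $\Gamma$ amounts to $BJ=JB$ for all $B\in F$, and being orthogonal and parallel such a $J$ automatically renders the flat metric K\"ahler (parallelism forces integrability and closedness of the K\"ahler form). Next I would use the isotypic decomposition $\R^n=\bigoplus_I V_I$, where $V_I$ is the span of the $q_{_I}$ canonical basis vectors on which $F$ acts through $\chi_{_I}$. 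These $V_I$ are mutually orthogonal coordinate subspaces, and since the $\chi_{_I}$ are distinct one-dimensional real characters, Schur's lemma identifies the commutant of $F$ with $\prod_I \mathrm{End}(V_I)$; in particular any $J$ preserving every $V_I$ commutes with $F$. Assuming every $q_{_I}$ is even, on each block $V_I\cong\R^{q_{_I}}$ I would place the standard orthogonal complex structure $J_I=\left(\begin{smallmatrix} 0 & -\mathrm{Id} \\ \mathrm{Id} & 0 \end{smallmatrix}\right)$; then $J=\bigoplus_I J_I$ is orthogonal, satisfies $J^2=-\mathrm{Id}_n$, and commutes with $F$, so it descends to the desired invariant K\"ahler structure.

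The hyperk\"ahler assertion is handled the same way, replacing the single complex structure by a quaternionic triple. If every $q_{_I}\in 4\Z$, each block $V_I\cong\R^{q_{_I}}$ carries the standard flat quaternionic structure $(I_I,J_I,K_I)$ of $\mathbb{H}^{q_{_I}/4}$, with $I_I^2=J_I^2=K_I^2=-\mathrm{Id}$ and $I_IJ_I=K_I$; assembling these blockwise yields orthogonal $\Gamma$-invariant complex structures obeying the quaternionic relations, hence an invariant hyperk\"ahler structure on $M_\Gamma$. I expect the only genuinely delicate point to be the reduction of the notion of \emph{invariant K\"ahler structure} to the purely algebraic condition of commuting with $F$, that is, checking carefully that a parallel orthogonal complex structure descends to $M_\Gamma$ and that parallelism already gives integrability together with a closed K\"ahler form; the remaining block-diagonal constructions are routine, resting only on the elementary fact that $\R^m$ admits an orthogonal complex (resp.\ quaternionic) structure exactly when $m$ is divisible by $2$ (resp.\ by $4$).
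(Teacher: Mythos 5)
Your proposal is correct and follows essentially the same route as the paper: the orientability criterion via $\det B_j=(-1)^{\sum_{I:\,j\in I}q_{_I}}$, and explicit parallel orthogonal complex (resp.\ quaternionic) structures commuting with the holonomy, built block-by-block on the isotypic components --- which is exactly what the paper's consecutive-coordinate formulas for $J$ and $J'$ do, since even block sizes keep each pair inside a single character block. Your extra care in reducing ``invariant K\"ahler structure'' to the algebraic condition $BJ=JB$ for all $B\in F$ merely spells out what the paper leaves implicit.
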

\begin{proof}
The first assertion follows from the fact that  $\det(B_j) =(-1)^{\sum_{I:j\in I} q_{_I}}$ for each $j$.

Regarding the second assertion, if every $q_{_I}$ is even, then we may define a complex structure $J$ on $\R^n$ ($n=2m$)  by setting $J(e_{2i-1})= -e_{2i}, \,\, J(e_{2i}) = e_{2i-1}$ for each $1 \le i \le m$.
By \eqref{eq:rho}, this complex structure commutes with the action of the point group, hence it pushes down to a complex K\"ahler structure on $M_\Gamma$.

If furthermore each $q_{_I}$ is divisible by 4, then we can define an additional complex structure $J'$ on $\R^n$ by setting  $J'(e_{4i-3})=e_{4i-1},\,\, J'(e_{4i-2})=-e_{4i},\,\, J'(e_{4i-1})=-e_{4i-3},\,\, J'(e_{4i})=e_{4i-2}$ for $1\leq i \leq m/2$. Again this complex structure $J'$  commutes with the holonomy action and anticommutes with $J$. Therefore the pair $J, J'$ defines a hyperk\"ahler structure on $M_\Gamma$.
\end{proof}

We are now in a position to prove the main result in this paper.

\begin{teo} \label{thm:main}
For any $k\geq3$ and any $n\geq 3\, 2^{k-2}+1$ there exist explicit pairs of Bieberbach groups $\Gamma, \Gamma'$ of diagonal type of dimension $n$ with $F \cong F'\cong \Z_2^k$ such that $M_\Gamma$ and $M_{ \Gamma'}$ are Sunada (hence strongly) isospectral and their cohomology rings $H^*(M_\Gamma)$ and $H^*(M_{\Gamma'})$ are not isomorphic as graded $\Q$-algebras.

Furthermore, if $k=3,4,5$, the cohomology rings are not isomorphic as $\Q$-algebras.

\smallskip

Finally, if $n$ is even, $M_\Gamma$ is K\"ahler and $M_{\Gamma'}$ is not.
\end{teo}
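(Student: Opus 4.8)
The plan is to manufacture the pair $(\Gamma,\Gamma')$ from a single explicit \emph{flip} pair of diagonal representations and then verify the four assertions separately, feeding each into the machinery already built. First I would fix $k\ge3$ and choose an explicit $\rho=\sum_{I} q_{_I}\chi_{_I}$ of $\Z_2^k$ with $q_\emptyset=0$ (so $\beta_1=0$), arranged so that the flip datum $u$ of \eqref{eq:u} at the pair $\{1\},\{2\}$ is a \emph{nonzero odd} integer with all flipped multiplicities nonnegative, and let $\rho'$ be its flip. By Theorem~\ref{thm:metodoflip}, $\rho$ and $\rho'$ are almost-conjugate. To pass from representations to Bieberbach groups I would assign half-integer translation vectors $b_0\in\tfrac12\Z^n$ to the generators $B_i=\rho(f_i)$, using the \emph{same} template for $\rho$ and $\rho'$ transported through the flip correspondence $B_1\!\leftrightarrow\!B_2'$, $B_2\!\leftrightarrow\!B_1'$, $B_I\!\leftrightarrow\!B_I'$ otherwise, and arranging $n_{B,\frac12}\ge1$ for every $B$ to guarantee torsion-freeness. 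Any target $n\ge 3\cdot2^{k-2}+1$ is then reached by padding the multiplicities of characters outside the flip support (adding $2$ to keep parities, or $1$ to change $n$ by one), the minimal instance realizing the bound.

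For the Sunada assertion, the point of the shared template is that the flip correspondence preserves $n_B$ (Theorem~\ref{thm:metodoflip}) and, by construction, also $n_{B,\frac12}$, so that $c_{s,t}(\Gamma)=c_{s,t}(\Gamma')$ for all $s,t$ in the notation of \eqref{eq:Sunada}. Theorem~\ref{thm:sunadanumbers} then yields that $M_\Gamma$ and $M_{\Gamma'}$ are Sunada, hence strongly, isospectral, and $p$-isospectral for every $p$.

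For the cohomology rings I would use \eqref{eq:HSseq} to identify $H^*(M_\Gamma)\cong\Lambda^*_F$ and $H^*(M_{\Gamma'})\cong\Lambda^*_{F'}$. Since the manifolds are isospectral they share all Betti numbers, so with $q_\emptyset=0$ Proposition~\ref{prop:prim}(iii) pins $P_{0,F}=P_{0,F'}=1$, $P_{1,F}=P_{1,F'}=0$, $P_{2,F}=P_{2,F'}=\beta_2$ and $P_{3,F}=P_{3,F'}=\beta_3$; only the primitive numbers in degree $\ge4$ can move under the flip. Computing $P_{4,F}$ and $P_{4,F'}$ from Proposition~\ref{prop:prim}(i)--(ii) with the chosen multiplicities, I would check $P_{4,F}\neq P_{4,F'}$, which by Corollary~\ref{coro:sumprim} already obstructs a \emph{graded} $\Q$-algebra isomorphism for every $k\ge3$. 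For $k\le5$ the only nonzero primitive numbers sit in degrees $\le k+1\le6$; since the degrees $0,1,2,3$ are pinned, the ungraded invariant $\sum_p P_{p,F}$ differs as soon as $\sum_{p\ge4}P_{p,F}$ does, and checking this for the explicit family rules out even an ungraded isomorphism by Corollary~\ref{coro:sumprim}.

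Finally, when $n$ is even I would take all $q_{_I}$ even, so Lemma~\ref{lema:Kahler} endows $M_\Gamma$ with an invariant Kähler structure, while the odd $u$ forces some $q'_{_I}$ odd. Both manifolds stay orientable, since the flip changes each $\sum_{I:j\in I}q_{_I}$ by $\pm u\,2^{k-2}$ or $0$, all even, so $H^n\neq0$. To see $M_{\Gamma'}$ is \emph{not} Kähler I would invoke Hard Lefschetz: were it Kähler of real dimension $2m$ ($m=n/2$) with Kähler class $\omega\in H^2(M_{\Gamma'};\R)\cong\Lambda^2_{F'}$, then $\omega^m\neq0$ in $H^n\cong\R\cdot(e_1\wedge\dots\wedge e_n)$. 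But every invariant $\omega=\sum c_{ij}\,e_i\wedge e_j$ ranges over pairs with $\psi_i=\psi_j$, so the coefficient of $e_1\wedge\dots\wedge e_n$ in $\omega^{\wedge m}$ is a sum over perfect matchings of $\{1,\dots,n\}$ into equal-character pairs; such a matching exists iff every multiplicity is even, which fails here. Hence $\omega^m=0$ for all $\omega$, Hard Lefschetz is violated, and $M_{\Gamma'}$ admits no Kähler metric. The main obstacle is the very first, Construction step: producing, uniformly in $k\ge3$ and in every $n\ge3\cdot2^{k-2}+1$, one explicit flip template that simultaneously satisfies all the competing demands — odd $u$ with nonnegative flipped multiplicities, a genuine difference in some degree-$\ge4$ primitive number, the all-even pattern when $n$ is even, and translation data realizing $c_{s,t}(\Gamma)=c_{s,t}(\Gamma')$ together with torsion-freeness — since these constraints pull against one another and must be packaged into a single family covering the whole range of $(k,n)$.
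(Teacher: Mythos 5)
Your strategy coincides with the paper's: a flip pair with odd $u$, a shared translation template so that the Sunada numbers $c_{s,t}$ match, primitive-form counts plus Corollary~\ref{coro:sumprim} for the ring non-isomorphism, and all-even multiplicities versus an odd block for the K\"ahler dichotomy. Your non-K\"ahler argument (no perfect matching of $\{1,\dots,n\}$ into equal-character pairs when some $q'_{_I}$ is odd, hence $\omega^{n/2}=0$ for every invariant $2$-form, contradicting Hard Lefschetz) is essentially the paper's observation that $\bigwedge^{n/2}\Lambda^2_{F'}=0$ because the invariant $2$-forms only involve indices from blocks of size at least two; both versions are correct.

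The genuine gap is that the two computational cores of the theorem are deferred rather than carried out. First, the theorem asserts \emph{explicit} pairs for every $k\geq3$ and every $n\geq 3\cdot2^{k-2}+1$, and you never exhibit a family meeting all of your own competing constraints; you correctly flag this as ``the main obstacle.'' The paper resolves it with one uniform choice, $\rho=2^{k-2}\chi_{_1}+\sum_{2\in I,\,1\notin I}2\chi_{_I}+q\chi_{_3}$ with $q=n-3\cdot2^{k-2}$, for which $u=1$; the free parameter $q$ absorbs every admissible $n$, so no ad hoc padding (which would alter the $P_p$'s and force all inequality checks to be redone for each $n$) is needed. Second, the phrases ``I would check $P_{4,F}\neq P_{4,F'}$'' and ``checking this for the explicit family'' are exactly where the work lies: the paper proves $P_4'>P_4$ by counting, namely $P_4=2^4\,\tfrac{2^{k-2}(2^{k-2}-1)(2^{k-2}-2)}{4!}$ against the lower bound $P_4'\geq\binom{2^{k-2}}{2}\bigl(q_{_1}'+\tfrac{2^{k-2}-2}{2}\bigr)$, and for the ungraded statement it additionally proves $P_5'>P_5$ when $k>3$ and $P_{k+1}=0<P_{k+1}'$, so that all degree-$\geq4$ differences point in the \emph{same} direction and cannot cancel in the total $\sum_p P_p$. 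This directional consistency is not automatic: for $k=4,5$ knowing only $P_4\neq P_4'$ would not rule out an ungraded isomorphism, since $P_5$ or $P_6$ could a priori compensate. Without these inequalities, established for a concretely specified family, neither the graded nor the ungraded non-isomorphism claim is proved.
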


\begin{proof}
We fix $k\ge3$ and $n>3\,2^{k-2}$. Set
\begin{equation}\label{eq:rhomain}
\rho=2^{k-2}\chi_{_1}+\sum_{2\in I, \, 1\notin I} 2\chi_{_I}+ q\chi_{_3},
\end{equation}
where $q=n-3\,2^{k-2}$.
One can check that  $\rho$ is faithful since it contains the characters $\chi_{_1},\chi_{_2},\chi_{_{23}},\dots,\chi_{_{2k}}$.
For this $\rho$, equation \eqref{eq:u} gives
\[
  u=\frac{1}{2^{k-2}}\big(2\, 2^{k-2}-2^{k-2}\big)=1,
\]
since there are  $2^{k-2}$ subsets $I\subset\{1,\dots,k\}$ such that $2\in I$ and $1\notin I$.
Now Theorem~\ref{thm:metodoflip} implies that $\rho$ and its flip representation
\begin{equation}\label{eq:rho'main}
  \rho'=2^{k-2}\chi_{_1}+\sum_{1\in I, \, 2\notin I} \chi_{_I}+\sum_{2\in I, \, 1\notin I} \chi_{_I}+ q\chi_{_3}
\end{equation}
are almost-conjugate.

Our next goal is to construct Bieberbach groups $\Gamma$ and $\Gamma'$ with diagonal holonomy representations $\rho$ and $\rho'$ respectively, in such a way that $M_\Gamma$ and $M_{\Gamma'}$ are Sunada isospectral manifolds.
For $I\subset \{1,\dots,k\}$, we denote by $q_{_I}$ and $q_{_I}'$ the coefficients of $\rho$ and $\rho'$ respectively and by $B_I$ and $B_I'$ the $n\times n$ diagonal matrices given by \eqref{eq:rho}.
We pick
\begin{align}
b_{1}&=\tfrac12 e_{l_1},&&\textrm{where}&
 l_1&= \textstyle{\sum\limits_{I\prec\{2\}}} q_{_I} + 1, \label{eq:b_1}\\
b_{2}&=\tfrac12 (e_{l_2}+e_{\tilde l_2}),&&\textrm{where}&
 l_2&=\textstyle{\sum\limits_{I\prec\{2,3\}}} q_{_I} +1,\quad
 \tilde l_2=\textstyle{\sum\limits_{I\prec\{3\}}}   q_{_I}+1,\label{eq:b_2}\\
b_{m}&=\tfrac12 e_{l_m},  &&\textrm{where}&
 l_m&=\textstyle{\sum\limits_{I\prec\{1\}}} q_{_I} + m-2\quad\textrm{and}\quad 3\leq m\leq k. \label{eq:b_m}
\end{align}
Now it is convenient to fix   $b_I \in \{0, \frac 12 \}^n$, as the only vector so that  $(b_I)_j \equiv \sum_{i\in I} (b_i)_j$ $\mod \Z,$, for each $I$ and for every $j$.
We define $b_I'$ for each $I$  in the same way as $b_I$,  replacing $q_{_I}$ by  $q_{_I}'$.

For each $I\subset\{1,\dots,k\}$, let $\gamma_I=B_I L_{b_I}$ and $\gamma_I'=B_I' L_{b_I'}$. Finally consider
\begin{align*}
\Gamma  =&\;\langle \gamma_{_I} :I\subset\{1,\dots,k\},\, L_{\Z^n}\rangle,\\
\Gamma' =&\;\langle \gamma_{_I}':I\subset\{1,\dots,k\},\, L_{\Z^n}\rangle.
\end{align*}
To prove that $\Gamma$ is a Bieberbach group, by \cite[Prop.\ 1.1(ii)]{MR2}, it is sufficient to check the following condition:
\begin{equation} \label{eq:conditiontorsionfree}
\text{for each $I\neq\emptyset$,  $(b_I)_j =\tfrac12$  for at least one $j$ in the space fixed by $B_I$}.
\end{equation}

In Table~\ref{tabla:Gamma} we show (in column notation) part of the matrices $B_1,\dots, B_k$ together with the vectors $b_1,\dots, b_k$.
We include only the rows $l_1, l_2, \tilde l_2, l_3,\dots,l_k$, as defined in \eqref{eq:b_1}, \eqref{eq:b_2} and \eqref{eq:b_m}, since these are the only rows having a non-zero component for at least one $b_i$,  $1\le i \le k$.
\begin{table}%[hbt]
\caption{The column notation for $\Gamma$.}\label{tabla:Gamma}%
\vspace{3mm}
$
\begin{array}{ccr@{}c@{}lr@{}c@{}lr@{}c@{}lr@{}c@{}lcr@{}c@{}l}
{\textrm{Character}\atop\textrm{set}}&
\rule{1mm}{0mm}{\textrm{Coor-}\atop\textrm{dinate}} \rule{1mm}{0mm}&
\multicolumn{3}{c}{\rule[-2mm]{0mm}{7mm} B_{1}} & \multicolumn{3}{c}{B_{2}} & \multicolumn{3}{c}{B_{3}}&
\multicolumn{3}{c}{\rule[-2mm]{0mm}{7mm} B_{4}} & \dots & \multicolumn{3}{c}{B_{k}} \\ \cline{3-18}
\chi_{_{1}} &l_3&-&1&        & &1&        & &1&_\frac12& &1&        &\dots& &1&\\
\chi_{_{1}} &l_4&-&1&        & &1&        & &1&        & &1&_\frac12&\dots& &1&\\
\vdots      &\vdots&&\vdots& & &\vdots&   & &\vdots&   & &\vdots&   &\ddots& &\vdots&  \\
\chi_{_{1}} &l_k&-&1&        & &1&        & &1&        & &1&        &\dots& &1&_\frac12 \\
\chi_{_{2}} &l_1& &1&_\frac12&-&1&        & &1&        & &1&        &\dots& &1& \\
\chi_{_{23}}&l_2& &1&        &-&1&_\frac12&-&1&        & &1&        &\dots& &1&\\
\chi_{_{3}} &\tilde l_2& &1&        & &1&_\frac12&-&1&        & &1&        &\dots& &1&
\end{array}
$
\end{table}

Thus, Table~\ref{tabla:Gamma} shows that the  condition \eqref{eq:conditiontorsionfree} holds for any subset $I$ having only one element.
Now assume that $I\subset\{1,\dots,k\}$ and $|I| > 1$.
If $1\notin I$, then it is clear that at least one of the coordinates $l_3,\dots,l_k$ of $b_I$ (which are in the fixed space of $B_I$)
equals $\frac12$.
Similarly, if $1\in I$ and $2\notin I$, then $(b_I)_{l_1} = \frac12$ and $B_I(e_{l_1})= e_{l_1}$; if $1\in I$, $2\in I$ and $3\in I$ then $(b_I)_{l_2} = \frac12$ and $B_I(e_{l_2})= e_{l_2}$; if $1\in I$, $2\in I$ and $3\notin I$ then $(b_I)_{\tilde l_2} = \frac12$ and $B_I(e_{\tilde l_2})= e_{\tilde l_2}$.
Thus, we have constructed Bieberbach groups $\Gamma$ and $\Gamma'$ with point groups $F$ and $F'$.

In order to check the Sunada isospectrality of $M_\Gamma$ and $M_{\Gamma'}$, we will use \eqref{eq:n_B}, \eqref{eq:n_Bhalf}, \eqref{eq:Sunada} and Theorem~\ref{thm:sunadanumbers}.
Since $\rho$ and $\rho'$ come from a flip, it is clear that \eqref{eq:n_Bflip} holds.
Furthermore,
\begin{equation*}
n_{B_{1},\frac12}=n_{B_{2},\frac12}=n_{B_{1}',\frac12}=n_{B_{2}',\frac12}=1 \qquad\text{and} \qquad
n_{B_I,\frac12}  =n_{B_I',\frac12}
\end{equation*}
for every $I\neq\{1\},\{2\}$.
Hence the Sunada numbers $c_{s,t}(\Gamma)$ and $c_{s,t}(\Gamma')$ coincide for every $0\leq t\leq s\leq n$.
We note that the first Betti number vanishes  since it follows immediately from Proposition~\ref{prop:prim} that $q_{_0}=0$ .

If $n$ is even,  it follows immediately from Lemma~\ref{lema:Kahler} that $M_\Gamma$ has an invariant K\"ahler structure since $q_{_I}$ is even for all $I$.
On the other hand, in the case of $\Gamma'$ we have that
\[
\mathcal P_{F'}^2= \left\{e_i\wedge e_j: l_3\leq i<j < l_3+2^{k-2}\right\}\bigcup \left\{ e_i\wedge e_j:\tilde l_2\leq i<j < \tilde l_2+q \right\}.
\]
Note that $\mathcal P_{F'}^2$ involves only those $e_i$ such that $i \in [\![l_3,l_3 +2^{k-2}-1]\!] \cup [\![\tilde l_2,\tilde l_2 + q-1]\!]$ and these sets do not fill all of the interval $[\![1,n]\!]$.
For instance, they do not include the index $l_2$.
This readily implies that the wedge product of $\frac n2$ times the subspace $\Lambda_{F'}^2$ cannot involve $e_{l_2}$ hence
$
\bigwedge_{1}^{n/2} \Lambda_{F'}^2=0
$
since $\Lambda^n_ {F'}$ is one-dimensional.
This implies that  $M_{\Gamma'}$ cannot admit a K\"ahler structure.

It remains only to prove the cohomology rings are not isomorphic as $\Q$-algebras for $3\le k\le 5$. For this, we will prove some inequalities between the number of primitive polynomials of degree $p$. For simplicity, throughout this proof we write $P_p$ and $P_p'$ in place of $P_{p,F}$ and $P_{p,F'}$.
Let us first prove that  $P_4'>P_4$.
From \eqref{eq:P_p} we have $P_4=\sum q_{_{I_1}}\dots q_{_{I_4}}$ where we add over all $\{I_1,\dots,I_4\}\in\mathcal A_4$ (see \eqref{eq:Ap}).
It is not difficult to see that if $\{I_1,\dots,I_4\}\in\mathcal A_4$ with $q_{_{I_1}}\dots q_{_{I_4}}>0$, then the indices $I_j$ must occur in \eqref{eq:rhomain} and cannot equal $\{1\}$ nor $\{3\}$.
Then $I_j=\{2\}\cup \widetilde I_j$ with $\widetilde I_j\subset\{3,\dots,k\}$ for every $j=1,2,3,4$, thus $q_{_{I_j}}=2$. Hence $P_4$ is $2^4$ times the number of choices of four different subsets $\widetilde I_1,\dots,\widetilde I_4 \subset \{3,\dots,k\}$ such that $\chi_{_{\widetilde I_1}}\dots \chi_{_{\widetilde I_4}}=1$.
Now, there are $2^{k-2}$ choices for $\widetilde I_1$,  $2^{k-2}-1$ choices for $\widetilde I_2$,  $2^{k-2}-2$ for $\widetilde I_3$ and $\widetilde I_1$ is determined.
This counting argument shows that
\begin{equation}\label{eq:P_4}
P_4=2^4 \frac{2^{k-2}(2^{k-2}-1)(2^{k-2}-2)}{4!}.
\end{equation}

Similarly, $P_4'=\sum q_{_{I_1}}'\dots q_{_{I_4}}'$ where we add over all elements in $\mathcal A_4$.
Now, just counting some primitive elements will show that $P_4'> P_4$.
For any $\widetilde I_1,\widetilde I_2, \widetilde I_3 \subset\{3,\dots,k\}$ with $\widetilde I_1\neq \widetilde I_2$, we take
\begin{align*}
  I_1&=\{2\}\cup\widetilde I_1,&
  I_2&=\{2\}\cup\widetilde I_2,&
  I_3&=\{1\}\cup\widetilde I_3,&
  I_4&=I_1\triangle I_2\triangle I_3,
\end{align*}
where $I\triangle J:=(I\cup J)\setminus(I\cap J)$.
One checks that $\{I_1,\dots,I_4\}\in \mathcal A_4$.
There are $\binom{2^{k-2}}{2}$ choices for the pair $\widetilde I_1$, $\widetilde I_2$.
For $\widetilde I_3$ there are $2^{k-2}$ choices, but when we consider $\widetilde I_4$ we have to divide them by two.
Now, we have to take into account the multiplicities $q_{_{I_i}}'$, which are all one except $q_{_{1}}'=2^{k-2}+1$.
Hence
\begin{equation}\label{eq:P_4'}
P_4'\geq \binom{2^{k-2}}{2} \left(q_{_1}'+\frac{2^{k-2}-2}{2!}\right)=\frac{2^{k-2}(2^{k-2}-1)}{2}2^{k-3}3.
\end{equation}
Combining \eqref{eq:P_4} and \eqref{eq:P_4'} we conclude that
\[
P_4'-P_4\geq 2^{k-2}(2^{k-2}-1)\left(\frac{2^{k-3}3}{2} - \frac{2^4(2^{k-2}-2)}{4!}\right)>0.
\]

Now we shall prove that $P_5'>P_5$ for $k>3$. Consider $\{I_1,\dots,I_5\}\in\mathcal A_5$ with $q_{_{I_1}}\dots q_{_{I_5}}>0$.
We note that $I_j\neq\{1\}$ for every $j=1,\dots,5$, since the number $1$ have to occur an even number of times.
Moreover, the index $\{3\}$ occurs once.
Then, by changing the order if necessary, we can write $I_{5}=\{3\}$ and $I_j=\{2\}\cup \widetilde I_j$ with $\widetilde I_j\subset\{3,\dots,k\}$ for $1\leq j\leq 4$. Thus $q_{_{I_{5}}}=q$ and $q_{_{I_j}}=2$ for $j=1,\dots,4$.

Hence $P_5$ equals $2^4 q$ times the number of possible choices of four different subsets $\widetilde I_1,\dots,\widetilde I_4\subset \{3,\dots,k\}$
such that $\chi_{_{\widetilde I_1}}\dots \chi_{_{\widetilde I_4}}=\chi_3$ and where no subproduct of two of them equals $\chi_0$ nor $\chi_3$.
Again, by a similar counting argument, we get
\begin{equation*}%\label{eq:P_5}
P_5= 2^4\,q\, \frac{2^{k-2}(2^{k-2}-2)(2^{k-2}-4)}{4!}.
\end{equation*}
We can now proceed similarly as in the proof of \eqref{eq:P_4'} fixing $I_5=\{3\}$ and defining $I_4=I_1\triangle I_2\triangle I_3\triangle I_5$, obtaining
\begin{equation*}%\label{eq:P_5'}
P_5'\geq q \frac{2^{k-2}(2^{k-2}-2)}{2} \left(q_{_1}'+\frac{2^{k-2}-2}{2!}\right).
\end{equation*}
Consequently we obtain that $P_5'>P_5$ if $k>3$, as asserted.

We now prove that $P_{k+1}=0$.
Suppose there is a set $A=\{I_1,\dots,I_{k+1}\}\in\mathcal A_{k+1}$ such that $q_{_{I_{1}}}\dots q_{_{I_{k+1}}}>0$.
Note that any subset of $\{\chi_{_{I_{1}}},\dots, \chi_{_{I_{k+1}}}\}$ having $k$ elements is linearly independent in $\widehat\Z_2^k$.
By the construction of $\rho$, $\{1\}\not\in A$. When $k+1$ is even, it is clear that also $\{3\}\not\in A$, thus every $I_j\in A$ is the union of $\{2\}$ and a subset of $\{3,\dots,k\}$. This contradicts the linear independence mentioned above.
Similarly, when $k+1$ is odd, it follows that $\{3\}\in A$. The remaining $k$ elements $I_j\in A$ are such that  $2\in I_j$ and $1\not\in I_j$,
therefore they cannot be linearly independent, a contradiction.

We now show that $P_{k+1}'>0$.
When $k+1$ is even, it is clear that the product of $\chi_{_1},\chi_{_{13}}, \chi_{_{14}}, \dots ,\chi_{_{1k}}, \chi_{_{2}}, \chi_{_{23\dots k}}$ equals $\chi_{_0}$ while no subproduct of them equals $\chi_{_0}$, thus the set of the corresponding indices belong to $\mathcal A_{k+1}$.
When $k+1$ is odd, the same is true, with  $\chi_{_{3}}$ in place of $\chi_{_{13}}$.
Since all the corresponding coefficients $q_{_I}'$ in $\rho'$ are positive, the assertion follows.

The inequality $P_4<P_4'$ suffices to show that $\Lambda_F^*$ and $\Lambda_{F'}^*$ are not isomorphic as \emph{graded} algebras over~$\Q$.
To prove that the cohomology rings are not isomorphic as $\Q$-algebras, it is sufficient, by Corollary~\ref{coro:sumprim}, to show that $\sum_{p=1}^n P_p < \sum_{p=1}^n P_p'$.
We know by Proposition~\ref{prop:prim} that $P_0=P_0'=1$, $P_1=P_1'=0$ and $P_p=P_p'=0$ for $p> k+1$.
Furthermore, for $p=2,3$, $P_p=\dim(\Lambda_F^p)=\beta_p(M_\Gamma)$ and $P_p'=\dim(\Lambda_{F'}^p)=\beta_p(M_{\Gamma'})$ since $q_{_0}=0$.
Also $\beta_p(M_\Gamma)=\beta_p(M_{\Gamma'})$ for all $p$ since $\Gamma$ and $\Gamma$ are Sunada isospectral, hence $P_2=P_2'$ and $P_3=P_3'$.
Finally, since we have proven that $P_4+P_5+P_6 < P_4'+P_5'+P_6'$ when $3\leq k\leq 5$, it follows that $\Lambda_F^*$ and $\Lambda_{F'}^*$ are not isomorphic as $\Q$-algebras. This completes the proof of the theorem.
\end{proof}

\begin{rem}
\label{question:open2}
\textbf{Some open questions.}

(i) We expect that the cohomology rings of the manifolds constructed in the theorem are not isomorphic for every value of $k$, not just for $3\le k \le 5$. By similar arguments, we can still show non-isomorphism also for some values of $k>5$ but the argument becomes much more involved. We feel it would of interest to find an elegant proof valid for general $k$.

(ii) Actually, it should be possible to construct by similar methods strongly isospectral families of  arbitrarily large cardinality  having pairwise  non-isomorphic cohomology rings (provided  $k$, and hence $n$, are allowed to grow).
\end{rem}

\section{Some explicit families}\label{sect:examples}

In this last section we exhibit several strongly isospectral families in low dimensions, showing different features in their (non-isomorphic) cohomology rings.
\renewcommand{\FF}[1]{\!\!\mathcal F^{4,{#1}}_} % no borrar! don't delete!

\begin{ejem}\label{ex:dim7}
Here we will define $\Gamma$ and $\Gamma'$ two Bieberbach groups of dimension $n=7$ with holonomy groups $\Z_2^4$.
Their holonomy representations are those of minimal dimension obtained for $k=4$ by means of Algorithm~\ref{algorithm} (pair $\,\,\FF71$ in Table~\ref{tabla:pairs4}), namely
\begin{eqnarray*}
\rho  &=& 2\chi_{{_1}} + \chi_{{_2}} + \chi_{{_{3}}} + \chi_{{_4}} + \chi_{{_{23}}} + \chi_{{_{24}}}, \\
\rho' &=& 2\chi_{{_1}} + \chi_{{_2}} + \chi_{{_{3}}} + \chi_{{_4}} + \chi_{{_{12}}} + \chi_{{_{234}}}.
\end{eqnarray*}
The corresponding Bieberbach groups have generators $B_iL_{b_i}$ ($1\le i\le 4$) which in column notation are given by:
\\
\noindent
\rule{0.05\textwidth}{0pt}
\begin{minipage}{0.4\textwidth}
\[
\Gamma:
\begin{array}{r@{}c@{}lr@{}c@{}lr@{}c@{}lr@{}c@{}l}
\multicolumn{3}{c}{\rule[-2mm]{0mm}{7mm} B_{1}} & \multicolumn{3}{c}{B_{2}} & \multicolumn{3}{c}{B_{3}}& \multicolumn{3}{c}{B_{4}}\\ \hline
-&1&        & &1&        & &1&_\frac12& &1&        \\
-&1&        & &1&        & &1&        & &1&        \\
 &1&_\frac12  &-&1&       & &1&        & &1&        \\
 &1&        & &1&_\frac12&-&1&        & &1&        \\
 &1&_\frac12& &1&_\frac12& &1&_\frac12  &-&1&        \\
 &1&        &-&1&        &-&1&        & &1&_\frac12  \\
 &1&        &-&1&        & &1&        &-&1&
\end{array}
\]
\end{minipage}
\hfill
\begin{minipage}{0.4\textwidth}
\[
\Gamma':
\begin{array}{r@{}c@{}lr@{}c@{}lr@{}c@{}lr@{}c@{}l}
\multicolumn{3}{c}{\rule[-2mm]{0mm}{7mm} B_{1}} & \multicolumn{3}{c}{B_{2}} & \multicolumn{3}{c}{B_{3}}& \multicolumn{3}{c}{B_{4}}\\ \hline
-&1&        & &1&        & &1&_\frac12& &1&        \\
-&1&        & &1&        & &1&        & &1&        \\
 &1&_\frac12&-&1&        & &1&        & &1&        \\
 &1&        & &1&_\frac12&-&1&        & &1&        \\
 &1&_\frac12& &1&_\frac12& &1&_\frac12&-&1&        \\
-&1&        &-&1&        & &1&        & &1&_\frac12        \\
 &1&        &-&1&        &-&1&        &-&1&
\end{array}
\]\end{minipage}
\rule{0.05\textwidth}{0pt}\\

By comparison of the Sunada numbers we see that the corresponding manifolds are Sunada isospectral.
Indeed on checks that, in both cases, the non-vanishing Sunada numbers are
$ c_{5,1} = c_{3,1} = c_{1,1}= 1$, $c_{5,2}= c_{4,2}= c_{4,1}= c_{3,1}= 2$, $c_{2,1}=4$.

The rings of invariants are given in Table~\ref{tabla:exn=7}. They are not isomorphic  by Corollary~\ref{coro:sumprim}, since the total number of primitive elements equals 5 for $F$ and 7 for $F'$. Note that $\Lambda_F^2 \wedge\Lambda_F^3 =\Lambda_F^5$ while $\Lambda_{F'}^2 \wedge\Lambda_{F'}^3 = 0$.
\begin{table}%[hbt]%
\caption{Invariants for $F$ and $F'$.}%
\label{tabla:exn=7}%
\vspace{3mm}
$
\begin{array}{c@{\espaciocol}ccccc} \hline
p &\rule[-5pt]{0pt}{14pt} {\Lambda }^p_F & P_{p,F} & P_{p,F'}& {\Lambda }^p_{F'} & \beta_p  \\ \hline\hline
0 & \operatorname{span}\{\mathbf{1}\} & 1 & 1 &
    \operatorname{span}\{\mathbf{1}\} & 1 \\ \hline
1 & 0 & 0 & 0 & 0 & 0 \\ \hline
2 & \operatorname{span}\{\,\mathbf{12}\,\} & 1 & 1 &
    \operatorname{span}\{\,\mathbf{12}\,\} & 1\\ \hline
3 & \operatorname{span}\{\,\mathbf{346},\,\mathbf{357}\,\} & 2 & 2 &
    \operatorname{span}\{\,\mathbf{136},\,\mathbf{236}\,\} & 2\\ \hline
4 & \operatorname{span}\{\,\mathbf{4567}\,\} & 1 & 1 &
    \operatorname{span}\{\,\mathbf{3457}\,\} & 1 \\ \hline
5 & \operatorname{span}\{\,12346,\,12357\,\} & 0 & 2 &
    \operatorname{span}\{\,\mathbf{14567},\,\mathbf{24567}\,\} & 2\\ \hline
6 & \operatorname{span}\{\,124567\,\} & 0 & 0 &
    \operatorname{span}\{\,123457\,\} &  1\\ \hline
7 & 0 & 0 & 0& 0 & 0\\ \hline
\end{array}
$\\[2mm]
Here we write $12$ in place of $e_1\wedge e_2$ and so on. Primitive elements are in  bold.
\end{table}
\end{ejem}

\begin{ejem}\label{ex:dim8}
We now assume that $\Gamma$ and $\Gamma'$ are as defined in the proof of Theorem~\ref{thm:main} with $k=3$ and $n=8$ ($q=2$).
The corresponding pair of diagonal representations coincides with pair $\mathcal{F}_1^ {3,8}$ in Table~\ref{tabla:pairs3}.
They are
 \begin{eqnarray*}
\rho  &=& 2\chi_{{_1}} + 2 \chi_{{_2}} + 2\chi_{{_{23}}} + 2 \chi_{{_3}} , \\
\rho' &=& 3\chi_{{_1}} + \chi_{{_{13}}} +  \chi_{{_2}} + \chi_{{_{23}}} + 2 \chi_{{_3}}.
\end{eqnarray*}
In column notation:\\
\noindent
\rule{0.15\textwidth}{0pt}
\begin{minipage}{0.3\textwidth}
\[
\Gamma:
\begin{array}{r@{}c@{}lr@{}c@{}lr@{}c@{}l}
\multicolumn{3}{c}{\rule[-2mm]{0mm}{7mm} B_{1}} & \multicolumn{3}{c}{B_{2}} & \multicolumn{3}{c}{B_{3}}\\ \hline
-&1&        & &1&        & &1&_\frac12\\
-&1&        & &1&        & &1&        \\
 &1&_\frac12&-&1&        & &1&        \\
 &1&        &-&1&        & &1&        \\
 &1&        &-&1&_\frac12&-&1&        \\
 &1&        &-&1&        &-&1&        \\
 &1&        & &1&_\frac12&-&1&        \\
 &1&        & &1&        &-&1&
\end{array}
\]
\end{minipage}
\rule{0.05\textwidth}{0pt}
\begin{minipage}{0.3\textwidth}
\[
\Gamma':
\begin{array}{r@{}c@{}lr@{}c@{}lr@{}c@{}l}
\multicolumn{3}{c}{\rule[-2mm]{0mm}{7mm} B_{1}} & \multicolumn{3}{c}{B_{2}} & \multicolumn{3}{c}{B_{3}}\\ \hline
-&1&        & &1&        & &1&_\frac12\\
-&1&        & &1&        & &1&        \\
-&1&        & &1&        & &1&        \\
-&1&        & &1&        &-&1&        \\
 &1&_\frac12&-&1&        & &1&        \\
 &1&        &-&1&_\frac12&-&1&        \\
 &1&        & &1&_\frac12&-&1&        \\
 &1&        & &1&        &-&1&
\end{array}
\]
\end{minipage}
\rule{0.15\textwidth}{0pt}\\

Then $M_\Gamma$ and $M_{\Gamma'}$ are Sunada isospectral, $H^*(M_\Gamma)\not \cong H^*(M_{\Gamma'})$ as abstract rings and furthermore $M_\Gamma$ is K\"ahler, while $M_{\Gamma'}$ is not.

We now study in some more detail the properties of these manifolds by direct computation, i.e.\ without appeal to Theorem~\ref{thm:main}.
Firstly, it is not hard to see that the $F$ (resp.$\,F'$)-invariant forms are as given in Table~\ref{tabla:exn=8}.

\begin{table}%[hbt]%
\caption{Invariants for $F$ and $F'$.}%
\label{tabla:exn=8}%
\vspace{3mm}
$
\begin{array}{c@{\hspace{3mm}}c@{}c@{\hspace{1mm}}c@{}cc} \hline
p &\rule[-6pt]{0pt}{16pt} {\Lambda }^p_F & P_{p,F} & P_{p,F'} & {\Lambda }^p_{F'} & \beta_p  \\ \hline \hline
0 & \operatorname{span}\{\mathbf{1}\} & 1 & 1 &
    \operatorname{span}\{\mathbf{1}\} & 1 \\ \hline
1 & 0 & 0 & 0 & 0 & 0 \\ \hline
2 & \operatorname{span}\{\,\mathbf{12},\, \mathbf{34},\, \mathbf{56},\, \mathbf{78}\,\} & 4 & 4 &
    \operatorname{span}\{\,\mathbf{12},\, \mathbf{13},\, \mathbf{23},\, \mathbf{78}\,\} & 4\\ \hline
3 & \operatorname{span}
     \left\{ \begin{array}{@{}c@{}}
%      \mathbf{357},\, \mathbf{367},\, \mathbf{457},\, \mathbf{467}\\
%      \mathbf{358},\, \mathbf{368},\, \mathbf{458},\, \mathbf{468}
      \mathbf{357},\, \mathbf{367},\, \mathbf{457}\\
      \mathbf{467},\, \mathbf{358},\, \mathbf{368}\\
      \mathbf{458},\, \mathbf{468}
     \end{array} \right\} & 8 & 8 &
    \operatorname{span}
     \left\{ \begin{array}{@{}c@{}}
%      \mathbf{147},\, \mathbf{247},\, \mathbf{347},\, \mathbf{567}\\
%      \mathbf{148},\, \mathbf{248},\, \mathbf{348},\, \mathbf{568}
      \mathbf{147},\, \mathbf{247},\, \mathbf{347}\\
      \mathbf{567},\, \mathbf{148},\, \mathbf{248}\\
      \mathbf{348},\, \mathbf{568}
     \end{array} \right\} & 8\\ \hline
4 & \operatorname{span}\left\{ \begin{array}{@{}c@{}}  1234,\, 1256,\, 1278\\ 3456 ,\, 3478,\, 5678 \end{array}\right \}  & 0 & 3 &
    \operatorname{span}\left\{ \begin{array}{@{}c@{}} \mathbf{1456},\, \mathbf{2456} ,\, \mathbf{3456} \\ 1278 ,\ 1378,\, 2378\end{array}\right \}  & 6\\ \hline
5 & \operatorname{span}\left\{ \begin{array}{@{}c@{}} 12357,\, 12358\\ 12367,\, 12368\\ 12457,\, 12458\\ 12467,\, 12468\end{array} \right\} & 0 & 0 &
    \operatorname{span}\left\{ \begin{array}{@{}c@{}} 12347,\, 12348\\ 12567,\, 12568\\ 13567,\, 13568\\ 23567,\, 23568\end{array} \right\} &8 \\ \hline
6 & \operatorname{span}\left\{ \begin{array}{@{}c@{}} 123456,\, 123478\\ 125678,\, 345678\end{array} \right\} & 0 & 0 &
    \operatorname{span}\left\{ \begin{array}{@{}c@{}} 123456,\, 145678\\ 245678,\, 345678\end{array} \right\} & 4 \\ \hline
7 & 0 & 0 & 0 & 0 & 0 \\ \hline
8 & \operatorname{span}\{\,12345678\,\} & 0 & 0 &
    \operatorname{span}\{\,12345678\,\} & 1 \\ \hline
\end{array}
$\\[2mm]
Here we write $12$ in place of $e_1\wedge e_2$  and so on. Primitive elements are in  bold.
\end{table}

Using Table \ref{tabla:exn=8}, it is easy to see that
\[
\Lambda^2_{F'}\wedge \Lambda^2_{F'} = \operatorname{span} \{\,1278,\, 1378,\, 2378\,\}
\]
and ${\mathcal P}^4_{F'} = \{\, 1456,\,2456,\,3456\,\}$ thus $ P_{4,F'}=3$.
Furthermore,
\[
\Lambda^2_{F'} \wedge\Lambda^2_{F'} \wedge \Lambda^2_{F'}= 0.
\]
This clearly implies that $M_{\Gamma'}$ cannot admit a K\"ahler structure.

Now we look at $M_\Gamma$. We have
\begin{eqnarray*}
&&
\Lambda^2_{F} \wedge \Lambda^2_{F} = \Lambda^4_{F} \,,\qquad\quad
\Lambda^2_{F} \wedge \Lambda^2_{F} \wedge\Lambda^2_{F} = \Lambda^6_{F}\,,\\
&&
\Lambda^2_{F} \wedge \Lambda^2_{F}\wedge \Lambda^2_{F} \wedge \Lambda^2_{F}
= \Lambda^8_{F} = \operatorname{span}\{\,12345678\,\}.
\end{eqnarray*}
It is clear that the cohomology rings are not isomorphic as graded rings. On the other hand, since $\Lambda^2_{F}  \wedge  \Lambda^2_{F} = \Lambda^4_{F}$, this says that  ${P}_{4,F}=0$, showing  that $\Lambda_F^*$ and $\Lambda_{F'}^*$  cannot be isomorphic as algebras, by Corollary~\ref{coro:sumprim} since we have just seen that $P_{4,F'}=3$.

The complex structure on $\R^8$ given by $ J e_{2j-1} =- e_{2j}$, $J e_{2j} =e_{2j-1}$, if $j=1,2,3,4$, commutes with the holonomy action of $F$, hence it induces a K\"ahler complex structure on $M_{\Gamma}$ with K\"ahler form $\Omega = 12+34+56+78$. Note also that $\tfrac1{24} \Omega \wedge  \Omega \wedge  \Omega\wedge \Omega=  12345678.$

Since $M_\Gamma$ is K\"ahler there is a natural  action of $\mathrm{SL}(2,{\C})$ on $H^*(M_\Gamma)_{\C}\cong\Lambda^*({\C}^n)_F$ which gives the Lefschetz decomposition of $H^*(M_\Gamma)_{\C}$. We have the operators $L$, $L^*$ given by $L(X) =\Omega\wedge X$, $L^*(X)= c_p *L*(X)$ on $\Lambda^p(M_\Gamma)$, with $c_p$ a constant. A form $\eta$ is such that $L^* \eta = 0$ generates an $\mathrm{SL}(2, {\C})$-module of dimension
$n-p+1$. The decomposition into irreducible submodules is as follows
\begin{equation}
H^*(M_\Gamma)_{\C}\cong \pi_5 \oplus 3\pi_3 \oplus 8\pi_2 \oplus 2\pi_1.
\end{equation}
To verify this we note that $1$ generates the irreducible submodule of dimension 5: $\operatorname{span}\{\,1,\, \Omega,\, \Omega\wedge\Omega,\, \Omega\wedge\Omega\wedge\Omega,\, \Omega\wedge\Omega\wedge\Omega\wedge\Omega\,\}.$
Furthermore $12-34, 12-56$ and $12-78$ are $2$-forms annihilated by $L^*$ and each one generates an irreducible submodule of dimension 3.
For instance, in the case of $12-34$ we have:
\begin{eqnarray*}
\Omega \wedge(12-34)              &=&  1256 + 1278 -3456-3478\,, \\ \notag
\Omega\wedge \Omega \wedge(12-34) &=&  2\,(125678 -345678)\,.
\end{eqnarray*}
Thus $\Omega \wedge \Omega\wedge \Omega\wedge (12-34)= 0$, hence   the $\mathrm{SL}(2,{\C})$-module generated by $12-34$ has dimension 3.

Similarly, a basis for the $4$-forms annihilated by $L^*$ is $1234  + 5678 -  1256 - 3478$, $1234 + 5678 - 3456 - 1278$.
These $4$-forms generate  a trivial module.

Furthermore, we recall that the dimension of the space of the $p$-forms in $\mathrm{Ker}(L^*)$, with $p \le n$ is $\beta_0^p = \beta_p - \beta_{p-2}$, see \cite{We}, which is a topological invariant.
This implies that $\beta_0^1 =1$ $\beta_0^2 =4-1 =3$, $\beta_0^3 = 8-0 = 8$, $\beta_0^4 =6 -4 =2$ and furthermore $\beta_0^p =\beta_0^{n-p}$ for all $p$.
We see that all vectors in $\mathrm{Ker}(L^*)$ of the same degree $p\le 4$ generate an irreducible module of dimension $4-p+1$.

We  note that the doubled manifolds $M_{d\Gamma}$, $M_{d\Gamma'}$ are both isospectral K\"ahler manifolds of dimension 16, but their cohomology rings are still not isomorphic.
Furthermore, $M_{d\Gamma}$ is hyperk\"ahler but $M_{d\Gamma'}$ is not.
\end{ejem}

\begin{ejem} \label{ex:8dim24}
As a final example, we will consider a family of eight compact flat manifolds of dimension $24$ with point group isomorphic to $\Z_2^3$.
This family was also found by using the algorithm explained at the end of Section~\ref{s.construction}. The coefficients $q_{_I}^{(j)}$ of the representations $\rho_j$ for $1\leq j\leq 8$, are given in Table~\ref{tabla:rho1-8}.
We will denote by $F_j$ the point group given by Definition~\ref{def:F} of $\rho_j$.
\begin{table}%[hbt]
\caption{Representations $\rho_1,\dots,\rho_8$.}%
\label{tabla:rho1-8}%
\vspace{2mm}
$
\begin{array}{c|cccccccc}
&q_0&q_1&q_2&q_3&q_{12}&q_{13}&q_{23}&q_{123}\\ \hline
 \rho_1&0&10&6&3&2&1&1&1\\
 \rho_2&0&10&6&2&2&2&2&0\\
 \rho_3&0&10&5&4&3&0&1&1\\
 \rho_4&0&10&4&4&4&0&2&0\\
 \rho_5&0& 9&7&4&2&1&1&0\\
 \rho_6&0& 9&6&5&3&0&1&0\\
 \rho_7&0& 8&8&4&2&2&0&0\\
 \rho_8&0& 8&6&6&4&0&0&0
\end{array}
$
\end{table}

Now we will show that, as in Theorem~\ref{thm:main},  for each of the given diagonal representations $\rho_j$ of $\Z_2^3$ one can find $24$-dimensional vectors $b_{i}^{(j)}$ with coordinates in $\{\tfrac 12, 0\}$ ($1\le i\le 3,\; 1\le j\le 8$) such that the resulting groups $\Gamma_j=\langle B_i^{(j)} L_{b_i^{(j)}},L_{\Z^n}\rangle$ are Bieberbach groups. Indeed, one can show that these choices can be made in many different ways.

To choose the vectors $b_{i}^{(j)}$, it is convenient to fix the following order in $\widehat \Z_2^3$:
\[
\chi_0\prec \chi_1\prec \chi_2\prec\chi_3\prec\chi_{12}\prec\chi_{13}\prec\chi_{23}\prec\chi_{123}.
\]

We choose the vectors $b_I^{(j)}$ for every $1\le j \le 8$, in such a way that the first row in each character set is as in Table~\ref{table:Gamma_j} and the other rows contain no $\frac 12$.
\begin{table}%[hbt]
\caption{Bieberbach groups $\Gamma_1,\dots,\Gamma_8$ in column notation.}
\label{table:Gamma_j}
\vspace{0mm}
$
\begin{array}{cr@{}c@{}lr@{}c@{}lr@{}c@{}lr@{}c@{}lr@{}c@{}lr@{}c@{}lr@{}c@{}l}
{\textrm{Character}\atop\textrm{set}}&
\multicolumn{3}{c}{\rule[-2mm]{0mm}{7mm} B_{1}} & \multicolumn{3}{c}{B_{2}} & \multicolumn{3}{c}{B_{3}}&
\multicolumn{3}{c}{B_{12}}& \multicolumn{3}{c}{B_{13}} & \multicolumn{3}{c}{B_{23}}&\multicolumn{3}{c}{B_{123}} \\ \cline{2-22}
\chi_{_{1}}  &-&1&        & &1&        & &1&_\frac12&-&1&        &-&1&_\frac12& &1&_\frac12&-&1&_\frac12\\
\chi_{_{2}}  & &1&        &-&1&        & &1&_\frac12&-&1&        & &1&_\frac12&-&1&_\frac12&-&1&_\frac12\\
\chi_{_{3}}  & &1&_\frac12& &1&_\frac12&-&1&        & &1&        &-&1&_\frac12&-&1&_\frac12&-&1&        \\
\chi_{_{12}} &-&1&_\frac12&-&1&        & &1&        & &1&_\frac12&-&1&_\frac12&-&1&        & &1&_\frac12\\
\chi_{_{13}} &-&1&        & &1&        &-&1&        &-&1&        & &1&        &-&1&        & &1&        \\
\chi_{_{23}} & &1&        &-&1&        &-&1&        &-&1&        &-&1&        & &1&        & &1&        \\
\chi_{_{123}}&-&1&        &-&1&        &-&1&        & &1&        & &1&        & &1&        &-&1&
\end{array}
$
\end{table}

The coefficients $q_1^{(j)}$, $q_2^{(j)}$, $q_3^{(j)}$ and $q_{12}^{(j)}$ are positive for all $j$, which  implies that the condition in \eqref{eq:conditiontorsionfree} is verified for $\Gamma_j=\langle B_i^{(j)} L_{b_i^{(j)}}, L_{\Z^n}\rangle$.
Thus, $\Gamma_j$ is a Bieberbach group for every $1 \le j \le 8$.

We claim that these compact flat manifolds $M_{\Gamma_j}$'s are Sunada isospectral. Indeed, using Table~\ref{tabla:rho1-8} and Table~\ref{table:Gamma_j}, it is not hard to check that the numbers $n_{B_I}$'s are as in Table~\ref{table:n_BforF_j}.

\begin{table}
\caption{Numbers $n_B$ (see~\eqref{eq:n_B}) for $B$ in each point group $F_j$.}
\label{table:n_BforF_j}
\vspace{0mm}
$
\begin{array}{c|cccccccc}
&n_{B_1}&n_{B_2}&n_{B_3}&n_{B_{12}}&n_{B_{13}}&n_{B_{23}}&n_{B_{123}}\\ \hline
 \rho_1&10& 14& 18& 6& 8& 12& 4\\
 \rho_2&10& 14& 18& 4& 8& 12& 6\\
 \rho_3&10& 14& 18& 6& 8& 12& 4\\
 \rho_4&10& 14& 18& 8& 6& 12& 4\\
 \rho_5&12& 14& 18& 6& 8& 10& 4\\
 \rho_6&12& 14& 18& 8& 6& 10& 4\\
 \rho_7&12& 14& 18& 6&10&  8& 4\\
 \rho_8&12& 14& 18&10& 6&  8& 4
\end{array}\,.
$
\end{table}

This tells us that the patterns $(c_0(\rho_j),\dots,c_{24}(\rho_j))$ (see Section~2) coincide for any $1\leq j \leq 8$. The numbers $c_i(\rho_j)$ which are nonzero are $c_4=c_6=c_8=c_{10}=c_{12}=c_{14}=c_{18}=1$.
Moreover, by our choices of the vectors $b_I^{(j)}$, the non-vanishing Sunada numbers for all $\Gamma_j$ are $c_{4,1} = c_{6,1} = c_{8,1}= c_{10,2}= c_{12,2}= c_{14,1}= c_{18,2}= 1$.

To compare the cohomology rings we consider the algebra $\Lambda_{F_j}^{*}$ of $F_j$-invariants for each $j$.
Proposition~\ref{prop:prim} (i) tells us that  $P_{0,F_j}=1$, $P_{1,F_j}=0$ and $P_{p,F_j}=0$ for every $p\geq 5$ and every $j$, since $q_{0}^{(j)}=0$ and $k=3$.
Furthermore $P_{2,F_j} = \beta_2(M_{\Gamma_j})$, $P_{3,F_j} = \beta_3(M_{\Gamma_j})$.
By strong isospectrality all manifolds have the same Betti numbers, hence they have the same $P_{2}$ and $P_{3}$.
They are given by
\[
P_2=P_{2,F_8}=\tbinom{8}{2}+ \tbinom{6}{2}+ \tbinom{6}{2}+ \tbinom{4}{2}=64,\quad
P_3=P_{3,F_8}=q_{1}^{(8)}q_{2}^{(8)}q_{12}^{(8)}=192.
\]

By Corollary~\ref{coro:sumprim}, we only need to show that the values of $P_{4,F_j}$ are all different to prove that the algebras $\Lambda^*_{F_j}$ are pairwise not isomorphic.
Now, by computing $P_{4,F_j}$ by means of (ii) in  Proposition~\ref{prop:prim}, we obtain
\[
\begin{array}{c|cccccccc}
j&1&2&3&4&5&6&7&8\\ \hline
P_{4,F_j}&371& 368& 335& 320& 191& 135& 128& 0
\end{array}\,,
\]
hence our assertion follows.
\end{ejem}

\end{document}